\def\al{\alpha}
\def\Real{\mathbb{R}}
\def\one{{\mathbf 1}}
\def\HH{{\mathcal H}}
\def\Prob{{\mathbb P}}
\def\rr{{\mathbf r}}
\def\xx{\textbf {\textit x}}
\def\HH{\textbf {\textit H}}
\def\ex{{\mathbb E}}
\def\separ{{\mathcal S}}
\def\vr{{\mathbf R}}
\def\vrinv{{\vr^{-1}}}
\def\prob{{\mathbb P}}
\def\phsp{{\mathcal P}}
\def\monot{{\mathcal M}}
\def\ba{\begin{pmatrix}}
\def\ea{\end{pmatrix}}
\def\nn{\nonumber}
\def\erfc{\rm erfc}
\def\rar{\rightarrow}
\newtheorem{dfn}{Definition}
\newtheorem{lem}{Lemma}
\newtheorem{thm}{Theorem}
\newtheorem{prp}{Proposition}
\newtheorem{rem}{Remark}
\newtheorem{cor}{Corollary}
\newtheorem{theo}{Theorem}
\title{Search on the Brink of Chaos}
\author{Y. Baryshnikov, V. Zharnitsky}
\address{ Department of Mathematics \\
University of Illinois  \\ Urbana, IL 61801 }
\date{\today}
\begin{document}

\begin{abstract}
The  Linear Search Problem is studied from the view point 
of Hamiltonian dynamics. For the specific, yet representative 
case of exponentially  distributed position of the hidden object, 
it is shown that the optimal orbit follows an unstable  
separatrix in the associated Hamiltonian system.  
\end{abstract}

\maketitle

\setcounter{tocdepth}{2}

\section{Introduction}
\label{sec-1}
The Linear Search Problem has a venerable history, going back to R.~Bellman ('63)
and A.~Beck ('64). They looked into
the following question:
\begin{quote}
  An object is placed at a point $\HH$ on the real line, according to a known
  probability distribution. A {\em search plan} (or {\em
    trajectory}) is a sequence $\xx=\{x_i\}_{i=1}^{\infty}$ with 
$\ldots  -x_4<-x_2<0<x_1<x_3<\ldots$ (or $\ldots -x_3<-x_1<0<x_2<x_4<\ldots$). 
  A search is performed by a {\em searcher} walking alternating to the
  points of the search plan, starting at $0$,
  until the point $\HH$ is found.
\end{quote}

The total distance traveled till the point is found is $L(\xx, \HH)$, and 
the {\em cost} of the search plan $\xx$ is given by
$$
E(\xx)=\ex \, [L(\xx, \HH)].
$$
The task is to find the plan $\xx$ minimizing $E(\xx)$. We are
therefore in the {\em average case} analysis situation.

The search problem has been also  studied in theoretical computer science, see {\em e.g.} \cite{reif}, where it was 
called cow-path problem.  There have been many interesting generalizations such as search on rays, rendezvous, search with turn cost etc. \cite{revenge,newman,alpern99}. Finally, there is  some recent work in connection  with robotics, see {\em e.g.} \cite{bretl}. 

\subsection{Background on Linear Search Problem}
\label{sec-1.1}
This Linear Search Problem was studied mostly by Anatole Beck and his coauthors
in a series of papers where they analyzed to great details the archetypal case of normally 
distributed $\HH$ (see \cite{franck, beck, rides,gal,lim}). 
It turned out that the
candidates for optimal trajectories form a 1-parametric family
(parameterized by the length of the first excursion $|x_1|$). Using 
careful analysis Beck  further reduced the choice of the candidates to just two
initial points, of which one turned out to be the best by
numerics. On the nature of these initial points, \cite{beck}
stated:
\begin{quote}
 ...we opine that this is a question whose answer will not shed much
 mathematical light. 
\end{quote}

This note aims at uncovering the underlying geometric structure of the Linear Search
Problem. Specifically, we argue that the correct framework here is
that of {\em Hamiltonian dynamics}, especially where hyperbolicity of
the underlying dynamics can be deployed. In our geometric picture  the mysterious two points
naturally appear at the intersection of a separatrix (that is present in the associated Hamiltonian system) with the curve of initial turning points.

To this end we analyze in
detail a one-sided version of the Linear Search Problem which we describe next. 
The original problem considered by Beck is addressed from the same
viewpoint in the appendix.  

We restrict our proofs mostly to the exponentially distributed
 position $\HH$: this is done primarily to keep the presentation succinct and
clear. In the appendix we demonstrate that our approach with small 
modifications works 
for some other distribution, {\em e.g.} for one-sided Gaussian. We believe that even more general classes
 of distributions can be also analyzed  - this will be done in a follow-up paper.

\subsection{Half-line problem}
\label{sec-1.2}
We concentrate here on a {\em one-sided gatherer} version of the search problem.
Here, the hiding object $\HH$ is located on the {\em half-line} $\Real_+$, according
to some (known) probability distribution. One searches for $\HH$
according to the {\em plan} 
\[
\xx=\{0=x_0<x_1<x_2<\ldots < x_k < \ldots \},
\]
 and stops after
the step $n=n(\xx,\HH)$ {\em iff} the point $\HH\in(x_{n-1}, x_n]$. 
One
can think of a {\em gatherer} who mindlessly collects anything on the 
way, bringing the loot to the origin, where the results are analyzed
(in a contrast to the  
{\em searcher}, who stops as soon as the sought after object is found).

As in the original version, 
one needs to minimize the average cost of the search, which in our case is given by
\begin{equation}\label{eq:cost}
E(\xx)=\ex [L(\xx,\HH)] =\ex \left [ \sum_{k=1}^{n(\xx,\HH)} x_k \right ].
\end{equation}

\begin{figure}[ht]
 \centering
 \includegraphics[width=.47\textwidth ]{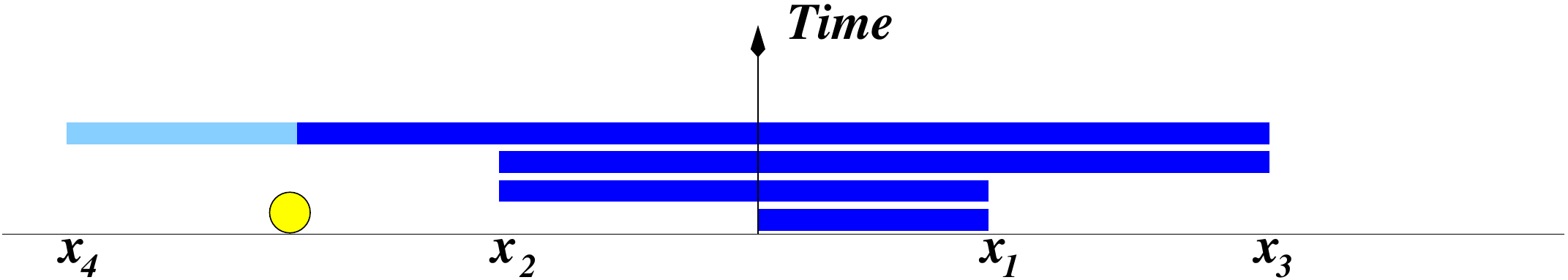}
 \includegraphics[width=.47\textwidth ]{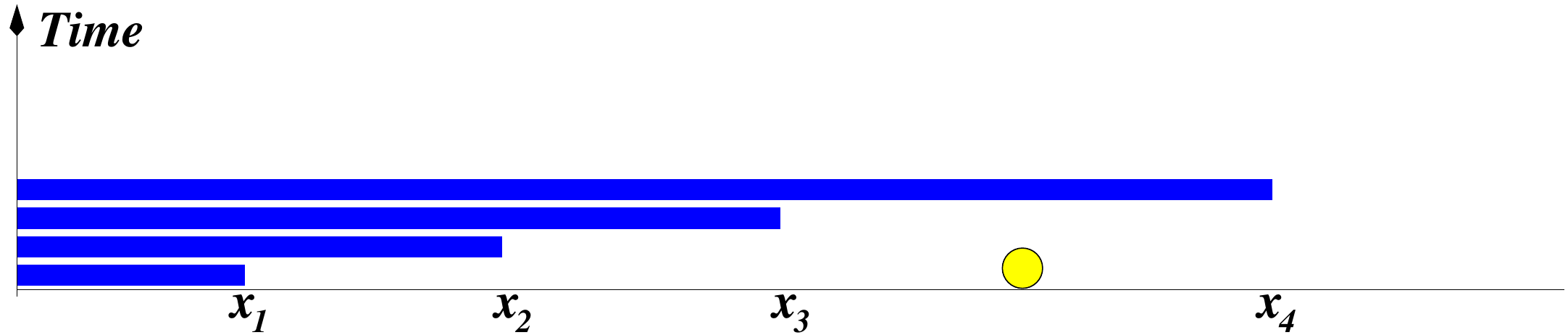}
 \caption{Linear Search Problem: two-sided searcher on the left,
   one-sided gatherer on the right. The cost of the indicated plans
   given the positions of the hidden objects are shown by darker shade.}
 \label{fig:lsp}
\end{figure}

{\iffalse
Note that the search problem on the line reduces to the above one-sided version
if there are two searches which have to meet at the origin with the 
found object. If the underlying probability distribution is symmetric, then 
 they have to search in the opposite directions returning to the orig-ion after 
each excursion.
\fi}

\subsection{Motivation}
One-sided linear search appears naturally in quite a few applications.
The initial motivation was the problem of search in unstructured Peer-to-Peer
storage systems, analyzed in \cite{yulik_oper}, where the relevance of
Hamiltonian dynamics was first noticed. In such an unstructured network,
one is sequentially flooding some (hop-)vicinity of a node, see Figure \ref{network},
 with request for an item, setting the Time-to-Live at some limit, until the item is
found. 
The cost of a plan is the total number of queries at all nodes of
the network, representing the per query overhead.  

\begin{figure}
 \includegraphics[width=.45\textwidth]{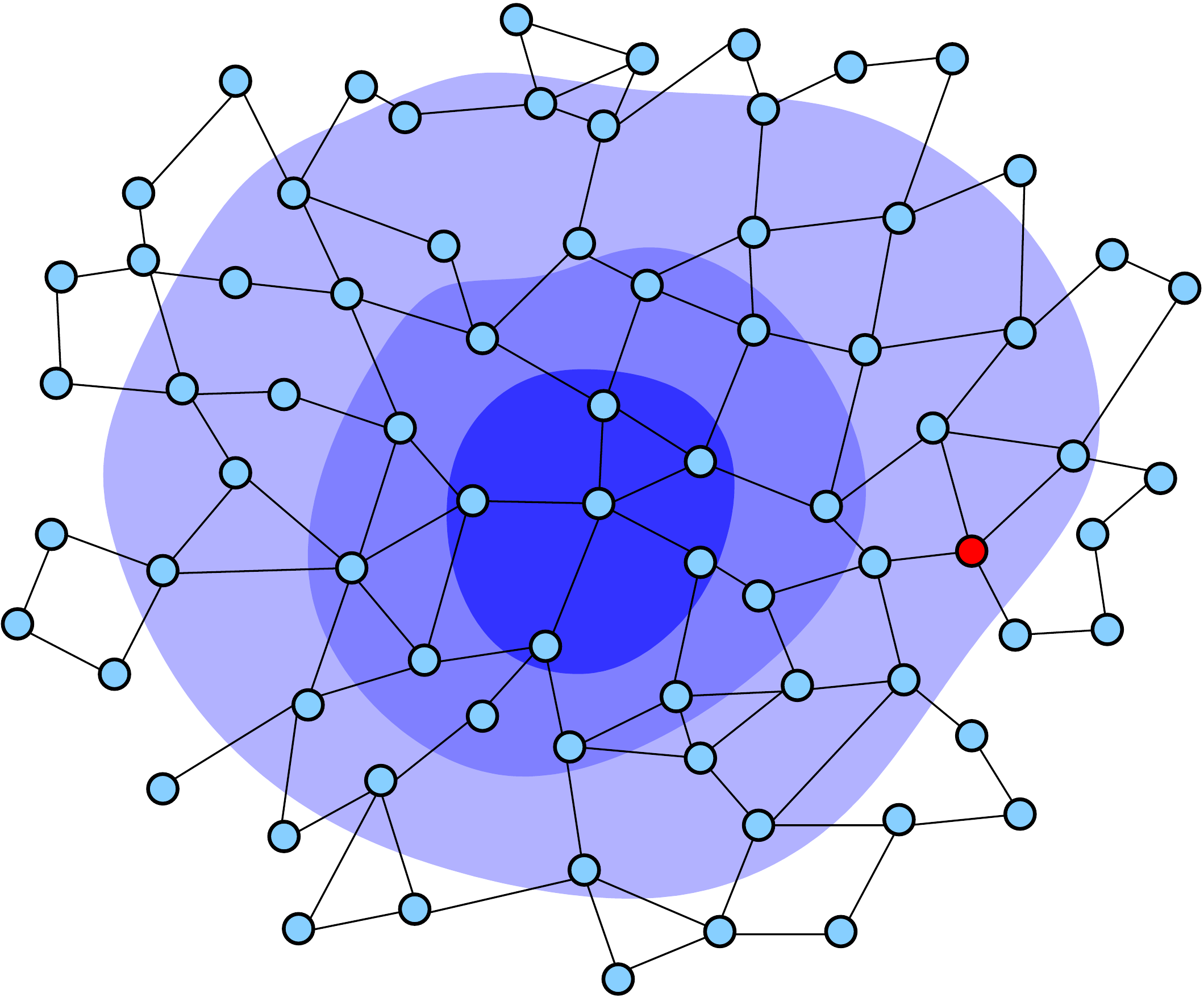}
\caption{Search for an object in Peer-to-Peer unstructured network. 
The object is found after the 3rd flooding.}
\label{network}
\end{figure}

Further applications include {\em robotic
  search}, where one deals with programming a robot of low sensing and
computational capabilities, unable to recognize the objects it collects.
Also the problem of efficient eradication of unwanted phenomena (say
irradiation of a tumor) can be mapped onto our model.

\subsection{Outline of the results}
\label{sec-1.3}
We start with the general discussion of the one-sided
search problem, showing in particular that the natural necessary
condition of optimality implies that the optimal plan should satisfy a
three-term recurrence, the {\em variational recursion} 
(a discrete analogue of the Euler-Lagrange
equations). 
This reduces the dimension of phase space, but also introduces  Hamiltonian dynamics. 

We analyze in details a ``self-similar'' case of homogeneous tail
distribution function, also called a Pareto distribution, and see that the phase 
space is split naturally into a chaotic and monotonicity regions, divided by a {\em separatrix}. 

Hamiltonian dynamics associated to the variational recursion is then studied. We
set up the stage for a general distribution, but mostly constrain our
proofs to the case of 
the {\em exponentially distributed position $\HH$ of the object},
i.e. to the case of 
$$
f(x):=\prob(\HH>x)=\exp(-x).
$$
We prove that the optimal trajectories should start at
the separatrix\footnote{This connection between energy minimizing orbits and 
invariant sets is reminiscent of the Aubry-Mather theory \cite{aubry}. 
There energy minimization is used to prove existence of the so-called 
 Aubry-Mather sets. Here we proceed in the other direction:  we establish an invariant set in order to find minimal ``energy'' orbits.  }. On the other hand, the plans satisfying variational recursion 
are represented by a one-dimensional curve. The intersection of the separatrix 
with the curve gives two candidates for the starting position,
mirroring the situation in the original setting of Beck {\em et al}'s papers.
We conclude with several open questions. 

Occasionally, we use several
standard notions  from the theory of dynamical systems; for definitions
we refer to \cite{katok}.

\section{Basic properties}
\label{sec-2}
\subsection{Basic notions}
\label{sec-2.1}
The input into the search algorithm is a {\em plan}, or a {\em trajectory}
\[
\xx=\{x_0=0, x_1, \ldots \, x_k, \ldots \}, x_k\geq 0, x_k \rar \infty,
\]
that is an unbounded sequence of turning points. 
Below we list some simple properties of the cost functional (\ref{eq:cost}):

\begin{prp}\label{prp:basic1}
The cost of a plan is given by
\begin{equation}\label{eq:cost-functional}
E(\xx)=\sum_{k=1}^{\infty} x_k \, \Prob( \HH >x_{k-1}) = 
\sum_{k=1}^{\infty} x_k f(x_{k-1}). 
\end{equation}
Any optimal search plan is strictly monotone.  In other words, if 
a plan $\xx = \{x_n\}$ is not strictly increasing, 
there is a naturally modified strictly monotone plan 
$\tilde \xx = \{ \tilde x_n\}$ such that $E(\tilde \xx) < E(\xx)$.
\end{prp}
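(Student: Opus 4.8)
The plan is to treat the two assertions separately: first establish the closed form (\ref{eq:cost-functional}) by a layer-cake/Tonelli argument, and then exploit the monotonicity of $f$ to show that any inversion in the plan can be removed at a strict saving.

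For the cost formula I would write $L(\xx,\HH)=\sum_{k\ge 1} x_k\,\one[\,k\le n(\xx,\HH)\,]$ and observe that the $k$-th excursion is performed exactly when the object has not been collected by the $(k-1)$-st turning point, i.e. when $\HH>x_{k-1}$; thus $\{k\le n(\xx,\HH)\}=\{\HH>x_{k-1}\}$ (for a monotone plan the nested intervals $(0,x_{k-1}]$ make this an identity). Since every summand is nonnegative, Tonelli's theorem lets me exchange $\ex$ and $\sum$, giving $E(\xx)=\sum_{k\ge1}x_k\,\Prob(\HH>x_{k-1})=\sum_{k\ge1}x_k f(x_{k-1})$, which is (\ref{eq:cost-functional}).

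For strict monotonicity I would argue by a local exchange. Suppose $x_k\le x_{k-1}$ for some $k\ge1$; the natural modification $\tilde\xx$ simply deletes the redundant turning point $x_k$ and reindexes. Using the formula just proved, a direct computation telescopes to
\begin{equation*}
E(\xx)-E(\tilde\xx)=x_k f(x_{k-1})+x_{k+1}\big(f(x_k)-f(x_{k-1})\big).
\end{equation*}
Because $f$ is non-increasing and $x_k\le x_{k-1}$, both terms are nonnegative, so the deletion never increases the cost; and since $f(x)=e^{-x}>0$ everywhere, the first term is strictly positive whenever $x_k>0$, yielding $E(\tilde\xx)<E(\xx)$. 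Iterating this reduction (equivalently, passing to the subsequence of strict running records of $\xx$, which is strictly increasing and still tends to $\infty$) produces the asserted strictly monotone plan of strictly smaller cost.

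The routine but genuinely necessary bookkeeping — the part I expect to need the most care — is the passage from the single exchange to the infinite sequence: I must check that the record subsequence is a bona fide plan ($\tilde x_k\rar\infty$), that the total saving is the convergent sum of the individual nonnegative gains (legitimate once $E(\xx)<\infty$, which we may assume since an optimal plan has finite cost), and that at least one gain is strictly positive so the inequality is strict. The only degenerate case to dispose of is a descent with $x_k=0$, where strictness must instead be recovered from the second term $x_{k+1}\big(f(x_k)-f(x_{k-1})\big)$ or from a subsequent positive record; this is where I would be careful, but it poses no real difficulty for $f=e^{-x}$.
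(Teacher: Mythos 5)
Your proposal is correct and follows essentially the same route as the paper: the same indicator identity $L(\xx,\HH)=\sum_k x_k\,\one(\HH>x_{k-1})$ for the cost formula, and the same deletion of offending turning points for monotonicity, where the paper merely asserts ``straightforward estimates'' and you supply the telescoped difference $x_k f(x_{k-1})+x_{k+1}(f(x_k)-f(x_{k-1}))\ge 0$ explicitly. Your extra care about passing to the record subsequence, summability of the gains, and the strictness of the inequality only fills in details the paper leaves implicit.
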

\begin{proof}
The contribution to the average cost is the length of excursion times the probability that such excursion will have to occur:
$$
E(\xx,\HH)=\sum_k x_k \cdot \one( \HH>x_{k-1})
$$
which implies \eqref{eq:cost-functional}.

Now, assume that a plan ${\bf x}$ is not strictly monotone. Consider a modified plan $\tilde \xx$, where the turning points  preventing strict monotonicity are removed. Then, as can be verified by straightforward estimates, $E(\tilde \xx) < E(\xx)$.
\end{proof}

\begin{prp}\label{prp:basic2}
If the position of the object is {\em known}, then the cost of its
  recovery, \mbox{$L:=\ex [\HH]$},  is a lower bound on the cost of any
  trajectory   
\[
E({\bf x}) \geq L.
\]
There exists a plan of cost at most
  $4L+\epsilon$ (thus finite if $L$ is). 
\end{prp}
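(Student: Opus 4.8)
The plan is to treat the two assertions by quite different means: the lower bound is an immediate pointwise estimate, whereas the upper bound requires exhibiting an explicit geometric plan and comparing its cost to an integral. For the lower bound I would argue pointwise in $\HH$. By definition the gatherer stops at the step $n = n(\xx,\HH)$ for which $\HH \in (x_{n-1}, x_n]$, so in particular $x_n \geq \HH$; since all turning points are nonnegative the realized cost satisfies $L(\xx,\HH) = \sum_{k=1}^{n} x_k \geq x_n \geq \HH$. Taking expectations and recalling $L = \ex[\HH]$ gives $E(\xx) = \ex[L(\xx,\HH)] \geq \ex[\HH] = L$ at once.

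For the upper bound I would test a doubling plan $x_0 = 0$, $x_k = a\,2^{k-1}$ for $k \geq 1$, where $a > 0$ is a small scale parameter to be sent to $0$. Using the closed form of the cost from Proposition \ref{prp:basic1}, $E(\xx) = \sum_{k \ge 1} x_k f(x_{k-1})$ with $f(x) = \Prob(\HH > x)$, together with the tail formula $L = \ex[\HH] = \int_0^\infty f(x)\,dx$, the estimate reduces to comparing the sum $\sum_{m \ge 1} t_m f(t_m)$ along the geometric mesh $t_m = a\,2^{m-1}$ to the integral $\int_0^\infty f$. Since $f$ is nonincreasing and $t_m - t_{m-1} = t_m/2$ for $m \ge 2$, each such term obeys $t_m f(t_m) \le 2 \int_{t_{m-1}}^{t_m} f$, so the tail of the sum is bounded by $2L$; the lone boundary term at $m = 1$, together with the factor $f(x_0) = f(0) \le 1$ weighting the first excursion, contributes only an $O(a)$ error. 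Collecting these bounds gives $E(\xx) \le 4L + 3a$, and choosing $a = \epsilon/3$ yields the claimed cost $4L + \epsilon$; finiteness of this plan whenever $L < \infty$ is then immediate.

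The only genuine freedom in the argument is the growth ratio of the geometric plan, and I expect this to be the one point deserving care. Running the same comparison with $x_k = a\lambda^{k-1}$ produces $E(\xx) \le a(1 + \lambda) + \frac{\lambda^2}{\lambda - 1}\,L$, and the constant $\frac{\lambda^2}{\lambda-1}$ is minimized at $\lambda = 2$, with value $4$; thus $4$ is precisely the best constant available from a pure geometric plan. The main (though mild) technical nuisance is bookkeeping the boundary contribution at $x_0 = 0$, which is exactly what forces the harmless additive $\epsilon$ in place of a clean bound $4L$.
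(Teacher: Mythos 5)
Your proof is correct, and your upper bound is essentially the paper's: the same doubling plan $x_k=a\,2^{k-1}$, the same comparison of the resulting geometric sum with $\int_0^\infty f$ using monotonicity of $f$, and the same outcome $E(\xx)\le 4L+O(a)$ (the paper writes the comparison as $L\ge \tfrac14\sum_k f(x_{k+1})x_{k+2}$ and then adds $x_1=A$, which is your bookkeeping in a slightly different order; your side computation that $\lambda^2/(\lambda-1)$ is minimized at $\lambda=2$, so $4$ is the best constant a pure geometric plan can give, is correct and goes beyond what the paper records). The one place you genuinely diverge is the lower bound: the paper again compares the cost series $\sum_k x_{k+1}f(x_k)$ term by term with $\int_{x_k}^{x_{k+1}}f$, using that $f$ is nonincreasing, whereas you observe pointwise that the realized cost dominates the position of the object, $L(\xx,\HH)=\sum_{k\le n}x_k\ge x_n\ge \HH$, and take expectations. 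Your version is slightly more elementary --- it needs neither the monotonicity of $f$ nor the tail formula $L=\int_0^\infty f(x)\,dx$ --- and makes the probabilistic content transparent, while the paper's stays entirely within the analytic expression \eqref{eq:cost-functional} for the cost; both are one-line arguments and both are valid.
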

\begin{proof}
First, note that the sum 
\[
E({\bf x}) = \sum_{k=0}^{\infty} x_{k+1}f(x_k)
\]
is bounded below by the integral 
\[
\int_0^{\infty}f(x)dx = L.
\]

Next, observe that
\[
L = \ex [\HH] = -\int_0^{\infty} x \cdot f^{\prime}(x) dx =  \int_0^{\infty} f(x) dx,
\]
by definition and using integration by parts once.

Then, using monotonicity of $f$ we estimate this integral from below
\[
L = \int_0^{\infty} f(x) dx = \sum_{k=0}^{\infty} \int_{x_k}^{x_{k+1}} f(x) dx \geq
 \sum_{k=0}^{\infty} (x_{k+1}-x_k)f(x_{k+1}).
\]
Evaluating the expression on the right over the geometric  sequence $x_0 = 0, x_k = A\cdot 2^{k-1}$  $( k = 1, 2, \ldots)$, we have
\[
L \geq \frac 1 4  \sum_{k=0}^{\infty} f(x_{k+1})x_{k+2}. 
\]
Adding $x_1= A$ to both sides, we obtain
\[
4L + A \geq E({\bf x}),
\]
which proves the claim since $A$ can be taken arbitrarily small.
\end{proof}

\begin{comment}
{\color{red}
\begin{prp}\label{prp:basic:e}
The competitive ratio $4$ above is optimal if we are looking for a
  distribution-independent, {\em oblivious} plans 
(this follows from the solution of an alternative one-sided search problem where 
the competitive ratio is optimized); if one is allowed 
to learn $f$ while devising
  the plan, then for a properly chosen $E$, the plan $x_k=E \, e^k$
  performs at worst as $eL$. 

 Similar performance can be achieved with the plan halving the
  residual probabilities, 
that is choosing $x_k$ so that $p_k=B \cdot2^{-k}$.
\end{prp}
\begin{proof}
 This plan uses ``Martingale''  (a.k.a. ``California split'') strategy, setting
  $x_k=\epsilon\cdot 2^k$, see  \cite{yulik_oper}. Note that this
  plan ignores any information about the distribution of $P$. \\

This follows, e.g. from \cite{reif}.
\end{proof}

Using these estimate one can show that stopping the optimal plan at
some point $x$ with small residual expected value $\ex P\one(P>x)$ gives a good
approximation for the optimal cost.
}
\end{comment}

If the tail distribution function is continuously differentiable (or even Lipshitz)
  on $[0,\infty)$, then
  the optimal trajectory does exist. In particular, one need not
  consider bi-infinite trajectories $\{ 0 < \ldots < x_{-2}< x_{-1}<x_1<x_2 < \ldots\}$. This is an extension of the  corresponding result  for the two-sided search, see {\em e.g.} \cite{beck_exist}. 
However, for
  completeness, we give an independent proof in the next section. The Lipshitz property 
is essential, as was also observed by Beck and Franck, since one can construct an example 
for which no sequence with finitely many terms near zero is optimal. In other words, there is no first turning point, see example in the next section.

\subsection{Variational recursion}

Optimality of a sequence implies a local condition.
\begin{prp} 
Assume the tail distribution function $f(x)=\Prob(\HH>x)$ is differentiable. 
 If the plan $\xx$ is optimal, then the terms $\{x_k\}$ satisfy the
{\em variational recursion}:
\begin{equation}\label{eq:var_rec}
f(x_{n-1})+x_{n+1} f'(x_{n})=0.
\end{equation}

\end{prp}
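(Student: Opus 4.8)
The plan is to derive the variational recursion as a first-order necessary condition for optimality, treating $E(\xx)$ as a function of the turning points and perturbing a single coordinate. By Proposition~\ref{prp:basic1} the cost functional has the explicit form
\begin{equation*}
E(\xx)=\sum_{k=1}^{\infty} x_k f(x_{k-1}),
\end{equation*}
so each interior turning point $x_n$ (with $n\ge 1$) appears in exactly two terms of the sum: it occurs as the ``multiplier'' $x_n$ in the term $x_n f(x_{n-1})$, and it occurs inside the tail function as $f(x_n)$ in the term $x_{n+1} f(x_n)$. All other terms are independent of $x_n$. This locality is the whole point: varying one coordinate only touches these two summands.

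First I would fix an optimal plan $\xx$ and, holding all $x_k$ with $k\ne n$ fixed, regard $E$ as a function of the single real variable $x_n$, writing
\begin{equation*}
E(\xx)=x_n f(x_{n-1}) + x_{n+1} f(x_n) + C,
\end{equation*}
where $C$ collects the terms not involving $x_n$. Since $f$ is assumed differentiable, this is a differentiable function of $x_n$, and because $\xx$ is optimal (hence strictly monotone by Proposition~\ref{prp:basic1}, so $x_n$ lies in the open interval $(x_{n-1},x_{n+1})$ and the variation is genuinely free) the derivative must vanish at the optimum. Differentiating with respect to $x_n$ gives
\begin{equation*}
\frac{\partial E}{\partial x_n}=f(x_{n-1}) + x_{n+1} f'(x_n)=0,
\end{equation*}
which is exactly the claimed recursion~\eqref{eq:var_rec}.

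The main point requiring care is the legitimacy of the single-coordinate variation and term-by-term differentiation of the infinite series. I would justify interchanging the derivative with the sum by noting that all summands except the two exhibited are constant in $x_n$, so no uniform-convergence argument for the tail is actually needed once we have isolated the $x_n$-dependence; the only genuine hypotheses used are that $E(\xx)$ is finite (so that the decomposition into $C$ plus two terms is meaningful) and that $f$ is differentiable. I would also remark that strict monotonicity of the optimal plan guarantees the perturbation $x_n\mapsto x_n+\delta$ keeps the sequence admissible for all sufficiently small $|\delta|$, so optimality forces a stationary point rather than a boundary extremum, and the first-order condition is therefore necessary. This step is the one deserving explicit attention; the differentiation itself is immediate.
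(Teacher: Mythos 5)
Your proof is correct and follows exactly the same idea as the paper's (one-line) argument: the cost depends on $x_n$ through only the two terms $x_n f(x_{n-1})$ and $x_{n+1}f(x_n)$, so setting the partial derivative to zero yields the recursion. You simply spell out the justification (strict monotonicity making the variation interior, finiteness of $E$) that the paper leaves implicit.
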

\begin{proof}
  It is immediate, if one notices that the cost depends on $x_k$ via
  only two terms, $f(x_{k-1}) x_k$ and $f(x_k) x_{k+1}$.
\end{proof}

This allows us to find $x_{n+1}$ as a function of $x_{n-1}, x_n$, 
$$
x_{n+1}=-\frac{f(x_{n-1})}{f'(x_n)}
$$
and to reconstruct the whole optimal plan from its first two points,
$x_0=0$ and $x_1$. 

In fact, it is useful to think of $\{x_k\}_{k=0,1,\ldots}$ as of
iterations of the mapping  
$\vr:\Real_+^2\to\Real_+^2$ given by 
$$
\vr:(x,y)\mapsto (y,-f(x)/f'(y))
$$
(which we will still be referring to as {\em
  variational recursion}).

\section{Existence of an optimal sequence}
\label{sec:exist_min}
For the two-sided (Beck-Bellman) search problem, the existence of 
the optimal search plans was shown in 
\cite{franck,beck_exist} and some improvements appeared in 
 the subsequent papers.
For completeness, we supply the existence proof for the  one-sided case,
as we consider in detail the associated nonlinear map. 

Recall the cost functional
\[
E({\bf x}) = \sum_{k=0}^{\infty} x_{k+1}f(x_k)
\]
and formulate the minimization problem:
\begin{eqnarray}\label{eq:minprob}
\hspace{10mm} E_0 = {\rm inf} \left \{ E({\bf x}),   {\bf x} =( \ldots ,x_{-2}, x_{-1},  x_0, x_1, x_2, ..., x_k, ...), x_j > 0,  j \in {\mathbb N}, x_k \rightarrow \infty  \right \}.
\end{eqnarray}
Note that we do not restrict the sequence to have the first term.  
We will prove this. On the other hand, if $f(x)$ does not vanish for any $x\geq 0$ 
there can be no other density points for an optimal plan, for otherwise 
the cost would be infinite.

Clearly,  $E_0 \geq 0$, since $E\geq 0$.
By definition of the infimum, there exists a minimizing sequence
$\{ {\bf x}^{(n)}\}$ such that
\[
E({\bf x}^{(n)}) \rar E_0.
\]
The goal is to show that there is a convergent subsequence such that
${\bf x}^{(n_k)} \rar {\bf x}^{*}$ and $E({\bf x}^{(n_k)})\rar E_0.$

\begin{prp}[Properties of minimizing sequences]
Assume $f$ is Lipshitz and $f(x)\neq 0$ for any $x\geq 0$.
In the minimization problem \eqref{eq:minprob},
there exist two positive monotone sequences, $\{ a_k \}_{k=0}^{\infty}, 
\{ b_k \}_{k=0}^{\infty}$, such that
$a_k < b_k$, $a_k \rightarrow \infty$, $b_k \rightarrow \infty$ and
there is a minimizing subsequence $\{ \xx^{(n)}\}$ such that $a_k < x_k^{(n)} < b_k $.
\end{prp}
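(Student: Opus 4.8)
The plan is to trap a single minimizing sequence coordinatewise between two explicit monotone sequences; the existence theorem then follows by a diagonal extraction from the compact boxes $[a_k,b_k]$. First I would fix a cost budget. By Proposition~\ref{prp:basic2} we have $E_0\le 4L+\epsilon<\infty$, so there is a minimizing sequence $\{\mathbf{x}^{(n)}\}$ which I may assume obeys $E(\mathbf{x}^{(n)})\le M$ for a fixed constant $M$ and, by Proposition~\ref{prp:basic1}, is strictly increasing. Since $f$ is nonincreasing, strictly positive, and integrable (hence $f(x)\rar 0$), a finiteness argument pins down the left tail: if $x_j^{(n)}\downarrow\ell>0$ as $j\rar-\infty$, then $x_{j+1}^{(n)}f(x_j^{(n)})\rar \ell f(\ell)>0$ and the cost diverges, a contradiction. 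As the cost is invariant under reindexing a bi-infinite sequence, I may therefore relabel each plan so that
\[
x_{-1}^{(n)}<1\le x_0^{(n)},
\]
that is, $x_0^{(n)}$ is the smallest term reaching $1$. This anchors all estimates below.

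Next I would build the upper sequence $\{b_k\}$ by a forward recursion off this anchor. Every summand of the cost is at most $M$, so $x_{k+1}^{(n)}f(x_k^{(n)})\le M$ for all $k$. For the base case, $x_{-1}^{(n)}<1$ and monotonicity of $f$ give $f(x_{-1}^{(n)})\ge f(1)$, whence $x_0^{(n)}\le M/f(1)=:b_0$. Inductively, if $x_k^{(n)}\le b_k$ then $f(x_k^{(n)})\ge f(b_k)>0$, so
\[
x_{k+1}^{(n)}\le \frac{M}{f(b_k)}=:b_{k+1}.
\]
Because $f$ never vanishes, each $b_k$ is finite, and the $b_k$ increase.

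The crux is the lower sequence $\{a_k\}$, which must be shown to tend to infinity; this is exactly the assertion that the minimizing plans escape to infinity at a rate uniform in $n$. Here I would play a counting estimate against the budget: if $x_k^{(n)}\le R$, then the $k$ points $x_0^{(n)}<\cdots<x_{k-1}^{(n)}$ all lie in $[1,R]$, so each term satisfies $x_{j+1}^{(n)}f(x_j^{(n)})\ge 1\cdot f(R)$, and summing yields $k\,f(R)\le E(\mathbf{x}^{(n)})\le M$. Thus $x_k^{(n)}\le R$ forces $k\le M/f(R)$; equivalently, defining $a_k$ by $f(a_k)=M/k$ for large $k$ (and $a_k:=1$ for the finitely many small $k$), we obtain $x_k^{(n)}>a_k$. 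Since $M/k\rar 0$ and $f\rar 0$ at infinity, $a_k\rar\infty$, and $a_k$ is monotone by construction; enlarging $b_k$ slightly if needed gives $a_k<b_k$ with $b_k\rar\infty$. The bounds $a_k<x_k^{(n)}<b_k$ then hold along the relabeled minimizing sequence. I expect the main obstacle to be precisely this lower bound: the upper bound and the anchoring are routine propagations of the per-term inequality, whereas the uniform escape to infinity is genuinely global, resting on the counting argument that ties the number of turning points below a threshold to the finite cost.
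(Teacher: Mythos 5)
Your upper-bound recursion $b_{k+1}=M/f(b_k)$ and your counting argument for the lower bound are sound and essentially coincide with the paper's proof (the paper bounds the number of turning points in each unit interval by weighing each summand against the cost budget, which is the same count-versus-budget idea). However, there is a genuine gap in your treatment of the left tail, and it is exactly the place where the Lipschitz hypothesis --- which your argument never uses --- must enter. You rule out a bi-infinite plan whose terms decrease to a limit $\ell>0$ as $j\to-\infty$, but you do not rule out $\ell=0$: if $x_j\downarrow 0$ the summands $x_{j+1}f(x_j)$ tend to $0$ and the left tail of the cost can converge (e.g.\ $x_{-j}=2^{-j}$ contributes at most $\sum_j 2^{-j+1}<\infty$). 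The paper's own example with $f(x)=1-\sqrt{x}$ shows this is not a technicality: without the Lipschitz condition the optimal plan genuinely has no first turning point. Consequently your relabelling so that $x_{-1}^{(n)}<1\le x_0^{(n)}$ does not dispose of the negative-index terms; it merely hides them, the trapped coordinates $k\ge 0$ no longer determine the plan or its cost, and the subsequent diagonal extraction cannot be expected to produce a minimizer. The paper closes this hole with a deletion lemma: if $f$ is Lipschitz with constant $C_L$ and some turning point satisfies $x_m<1/(2C_L)$, then deleting all $x_j$ with $j<m$ strictly decreases the cost (the loss $x_m(1-f(x_{m-1}))\le C_L x_m x_{m-1}$ is outweighed by the deleted terms). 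This forces minimizing plans to be one-sided with at most one term in $[0,1/(2C_L)]$, which is what anchors the lower sequence at a positive value and legitimizes indexing from $k=0$.

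A second, easily repaired slip: defining $a_k$ by $f(a_k)=M/k$ only yields $k\le M/f(a_k)=k$ from your counting bound, which is not a contradiction; you need strict room, e.g.\ $f(a_k)\ge 2M/k$, to conclude $x_k^{(n)}>a_k$.
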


\begin{proof}
First, we note that $E_0 \leq 4 L$ is a bounded quantity, see the previous section.
To prove existence of $\{ b_k \}$,  we first observe that any minimizing sequence  must satisfy
$E({\bf x}^{(n)}) \leq 2E_0$, for sufficiently large $n$. Thus, $x_1 \leq 2E_0 = b_1$ and 
then $x_2 f(x_1) \leq 2E_0$. Therefore, 
\[
x_2 \leq \frac{2E_0}{f(x_1)} \leq \frac{2E_0}{f(2E_0)} .
\]
We define then $b_2 = 2E_0/f(2E_0)$. Proceeding by induction,
\begin{equation}
b_{k+1} = 2E_0/f(b_k(E_0)),
\label{eq:beb}
\end{equation}
we obtain the desired sequence. Note that the sequence is strictly monotone as 
\[
x f(x) <  L \leq E_0 < 2E_0,
\]
and therefore, the mapping \eqref{eq:beb} cannot a fixed point $x=2E_0/f(x)$.

Thus, the sequence $\{ b_k \}$ monotonically grows to infinity and it bounds the
corresponding terms of the minimizing sequence.

To establish lower bounding sequence we prove\footnote{We use the notation 
$C_L$ for the Lipshitz constant.} 
\begin{lem}
Assume $f$ is Lipshitz and let ${\bf x}$ be a monotone, possibly bi-infinite,  sequence of turning points. Assume $x_m < 1/2C_L$, then the modified sequence $\tilde \xx$ with all $x_j (j < m)$ removed, will have lower cost.  
\end{lem}
\begin{proof}
Rewrite
\[
E(\xx) = \sum_k x_{k+1} f(x_k) = \ldots + x_{m-1} f(x_{m-2}) + x_m f(x_{m-1}) + x_{m+1} f(x_m) + \ldots
\]
and the modified sequence
\[
E(\tilde \xx) = x_m + x_{m+1}f(x_m) + \ldots.
\]
We need to show 
\[
x_m < \ldots +  x_{m-1} f(x_{m-2}) + x_m f(x_{m-1}).
\]
Rearranging some terms we get,
\[
\frac{1-f(x_{m-1})}{x_{m-1}} < \ldots + \frac{f(x_{m-2})}{x_m}.
\]
The left handside is bounded by the Lipshitz constant $C_L$ and the right handside 
is bounded from below by $1/x_m - C_L$. Therefore, by choosing 
$x_m< 1/2C_L$, we obtain the desired result. 
\end{proof}
Therefore, an optimal sequence of turning points is one-sided and there is at 
most one point in the interval $[0,\delta=1/2C_L]$. Then, we let $a_0 = 0$ and $a_1 = \delta$. 

Now, the sequence $\{a_k\}$ can be constructed using monotonicity 
$a_{k+1}\geq a_k$ and that there are finitely many terms on any interval 
of, say, unit size: $|\delta, \delta+1|,|\delta+1, \delta+2| $, etc. \\

Monotonicity has been proved in the previous section by showing that in nonmonotone sequence,
by deleting the appropriate terms, we obtain a strictly monotone sequence with smaller cost.

\end{proof}

\begin{theo}
There exists a converging subsequence,
 ${\bf x}^{(n)}\rightarrow {\bf x}^{(*)}$, where ${\bf x}^{(*)}$ is strictly monotone and  $x^{(*)}_k \rightarrow \infty$. The cost function converges $E({\bf x}^{(n)}) \rightarrow E({\bf x}^{*})=E_0$.
\end{theo}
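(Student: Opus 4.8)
The plan is to combine the coordinate-wise compactness coming from the previous proposition with a lower-semicontinuity argument for the cost functional. That proposition supplies a minimizing subsequence $\{{\bf x}^{(n)}\}$ whose $k$-th coordinate is trapped in the fixed compact interval $[a_k,b_k]$ for every $k$, with $a_k<b_k$ and $a_k,b_k\rar\infty$. First I would run a Cantor diagonal extraction: since each coordinate lives in a compact interval, one passes to a nested family of subsequences along which $x_0^{(n)}, x_1^{(n)},\ldots$ converge one coordinate at a time, and then takes the diagonal subsequence (still denoted $\{{\bf x}^{(n)}\}$) so that $x_k^{(n)}\rar x_k^{(*)}$ for every $k$ simultaneously. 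The limit automatically satisfies $a_k\le x_k^{(*)}\le b_k$, so $x_k^{(*)}\rar\infty$ (because $a_k\rar\infty$) and $x_k^{(*)}\ge a_1>0$ for $k\ge 1$; weak monotonicity $x_k^{(*)}\le x_{k+1}^{(*)}$ also passes to the limit.

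The heart of the argument is to show $E({\bf x}^{(*)})=E_0$. Since $f$ is Lipshitz, hence continuous, and the coordinates converge, each term of the cost series converges, $x_{k+1}^{(n)}f(x_k^{(n)})\rar x_{k+1}^{(*)}f(x_k^{(*)})$. As every term is nonnegative, Fatou's lemma for series yields
\[
E({\bf x}^{(*)}) = \sum_{k=0}^{\infty} x_{k+1}^{(*)} f(x_k^{(*)}) \leq \liminf_{n} \sum_{k=0}^{\infty} x_{k+1}^{(n)} f(x_k^{(n)}) = \liminf_{n} E({\bf x}^{(n)}) = E_0,
\]
the last equality holding because $\{{\bf x}^{(n)}\}$ is a subsequence of a minimizing sequence. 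On the other hand ${\bf x}^{(*)}$ is itself an admissible plan, so $E({\bf x}^{(*)})\ge E_0$ by the very definition of the infimum. Hence $E({\bf x}^{(*)})=E_0$, and since $E({\bf x}^{(n)})\rar E_0$, we also obtain $E({\bf x}^{(n)})\rar E({\bf x}^{(*)})$.

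Finally, strict monotonicity of the limit is recovered a posteriori from optimality: were ${\bf x}^{(*)}$ only weakly monotone, Proposition~\ref{prp:basic1} would produce a modified plan of strictly smaller cost, contradicting $E({\bf x}^{(*)})=E_0=\inf E$. Thus ${\bf x}^{(*)}$ is strictly increasing and tends to infinity, completing the proof. I expect the main obstacle to be precisely the lower-semicontinuity step: one must rule out that some of the cost escapes to infinity under mere coordinate-wise convergence. The nonnegativity of all summands is exactly what makes Fatou applicable and sidesteps any need for dominated convergence or uniform tail estimates; the only structural inputs are the uniform two-sided bounds $a_k<x_k^{(n)}<b_k$ from the previous proposition (which guarantee both the compactness needed for diagonalization and that the limit genuinely escapes to infinity rather than collapsing onto a finite accumulation point) together with the continuity of $f$.
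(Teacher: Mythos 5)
Your proposal is correct, and the skeleton (Cantor diagonalization over the coordinate-wise compact intervals $[a_k,b_k]$, then strict monotonicity recovered a posteriori from optimality via Proposition~\ref{prp:basic1}) coincides with the paper's. The one genuinely different step is how you identify the limiting cost. The paper proves the full convergence $E({\bf x}^{(n)})\rightarrow E({\bf x}^{*})$ directly: it splits off the partial sums $E^N$, which converge by continuity in finitely many coordinates, and then controls both remainders $E({\bf x})-E^N({\bf x})$ uniformly by a tail estimate of the form $4\int_{x_{N-1}}^{\infty}f(x)\,dx$ (reusing the doubling-plan comparison from Proposition~\ref{prp:basic2}), using the lower bounds $a_k\rightarrow\infty$ to push $x_{N-1}$ out. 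You instead prove only lower semicontinuity, $E({\bf x}^{*})\le\liminf_n E({\bf x}^{(n)})=E_0$, via Fatou's lemma for nonnegative series, and then close the loop with $E({\bf x}^{*})\ge E_0$ because the limit is an admissible plan (here you correctly use $x_k^{(*)}\ge a_k\rightarrow\infty$ and $x_k^{(*)}\ge a_1>0$, so the limit really lies in the feasible set of \eqref{eq:minprob}). Your route is more elementary and avoids the somewhat delicate uniform tail bound (which in the paper rests on a slightly informal comparison with a geometric tail); what the paper's argument buys in exchange is genuine continuity of $E$ along the subsequence rather than mere semicontinuity, which is information one might want elsewhere. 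Both arguments deliver the stated conclusion $E({\bf x}^{(n)})\rightarrow E({\bf x}^{*})=E_0$, in your case because the subsequence is minimizing by construction.
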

\begin{proof}
Fix $N>0$ and let ${\mathbb P}_N \xx = (x_1, x_2, ..., x_N)$. For the minimizing sequence $\xx^{(n)}$, let $\xx^{(n_1)}$ be a subsequence for which
$x^{n_1}_1 \rightarrow x_1^*$. Take a subsequence of this subsequence, so that
${\mathbb P}_2 {\bf x}^{(n_2)} \rightarrow \{ x_1^*, x_2^* \} $. Proceeding further and using diagonal subsequence ${\bf x}^{(n_k),k}$, we obtain a convergent subsequence, which we will still denote by  
${\bf x}^{(n)}\rightarrow {\bf x}^*$. The limit  $\xx^*$ is a monotone sequence by 
construction. It must be also strictly monotone, for if not, {\em i.e.} if 
some terms are equal, we already know from the previous section that by removing repeated terms 
the cost is decreased, which contradicts the sequence being minimizing.

Now, to prove the second part of the theorem, let $E^N({\bf x})$ denote $N-th$ 
partial sum. 
Fix $N>0$ to be sufficiently large,
and observe that $E^N ({\bf x}^{(n)}) \rightarrow E^{N}({\bf x}^*)$ just by 
continuity. Because of the lower bounding sequence $\{ a_k\}$, we can take $N$ so large that  $x^{(n)}_N$ and $x_N^*$ are larger than any fixed number. Consider now the remainders
\[
E({\bf x}^{(n)}) - E^N ({\bf x}^{(n)}), \,\,\,\, E({\bf x}^*) - E^N({\bf x}^*),
\]
which are arbitrarily small. Indeed,
\[
E({\bf x}) - E^N ({\bf x}) = x_{N+1} f(x_N) +  x_{N+2} f(x_{N+1})+\ldots
\]
 and since the sequence is minimizing 
we can estimate the reminder by choosing, e.g. $x_{N+k}= 2^{k+1} x_{N-1}$. 
Next, using an argument similar to the one used in 
Proposition \ref{prp:basic2}, we obtain the bound
\[
E({\bf x}) - E^N ({\bf x}) \leq 4 \int_{x_{N-1}}^{\infty} f(x) dx.
\]
The same bound holds for the other reminder.  Thus, taking $x_{N-1}$ 
large enough we can assure the reminders to be arbitrarily small. 
This implies the convergence  $E({\bf x}^{(n)}) \rightarrow E({\bf x}^{*})=E_0$.
\end{proof}

\begin{comment}
{\color{red} This corollary might be redundant}
\begin{cor}
The obtained minimizer ${\bf x}^{(*)}$ is a monotone sequence and satisfies the recurrent relation
\[
f(x_{n-1}) + x_{n+1} f^{\prime}(x_n) = 0,
\]
whenever the derivative exists.
\end{cor}
\begin{proof}
Since $E(x^{(*)})$ is finite, consider $E$ as a function of $x^{(*)}_n$. There are finitely many (two) terms containing $x^{(*)}_n$, so we can differentiate wrt $x^{(*)}_n$. By convexity, we observe that $E(x^{(*)}_n)$ has a minimum at the solution of the recurrent relation. Therefore, if
the relation is not satisfied then $x^{(*)}$ is not a minimizer.
\end{proof}
\end{comment}

Next we demonstrate that the Lipshitz condition is necessary. Indeed,  
without it we can construct an example with no initial turning point: \\ \\
\noindent
{\bf Example with singularity}.
If the tail distribution function is not Lipshitz then the sequence may fail to have the first turning point.  Here, we present a simple  example of one-sided search. 

Let $f(x)=1-\sqrt{x}$ and assume  the search is done on the unit interval 
$[0,1]$. It is also possible to modify this example to the infinite ray 
$(0,\infty)$ by changing $f(x)$ outside of any neighborhood of $0$ so it 
does not vanish anywhere.

Suppose, the optimal sequence is given by a one-sided 
sequence $\{ 0< x_1 < x_2 < x_3 < \ldots\}$ with the cost 
\[
E({\bf x}) = x_1 + x_2 (1-\sqrt{x_1}) + x_3 (1-\sqrt{x_2}) + \ldots.
\]

Let us insert another point $x_0: 0 < x_0 < x_1$, then the cost of modified sequence
is given by
\[
E({\bf \tilde x}) = x_0 + x_1 (1-\sqrt{x_0}) + x_2 (1-\sqrt{x_1}) + \ldots.
\]
Comparing them, we find that the cost of modified sequence is lower if and only if
\[
x_0 +x_1 (1-\sqrt{x_0}) < x_1  \Leftrightarrow	\sqrt{x_0} < x_1  \Leftrightarrow x_0 < x_1^2.
\] 
The latter inequality can be always achieved. Therefore, the optimal sequence does not have an initial turning point.

\section{Pareto distribution}
\label{sec-2.2}
In this section we present an explicit example which illustrates our general approach: 
the optimal plan of the search problem belongs to an invariant manifold (separatrix) of 
the associated Hamiltonian  map.
\subsection{Cost functional}
Consider a Pareto type tail distribution (analogous to that of \cite{reif})
\begin{eqnarray}
f(x) &=& x^{-\alpha} \,\,\, {\rm if} \,\,\ x \geq 1, \nonumber \\ 
f(x) &=& 1 \,\,\, \,\,\,\,\,\, \,\, {\rm if} \,\,\  0< x < 1,     \nonumber 
\end{eqnarray}
where we assume that $\alpha > 1$ in order to have a bounded expected value. 

We will use the notation, exceptionally, $x_0=1$, which makes  formulas look simpler.
Note that $x_0=1$ does not correspond to an actual turning point.
The expected cost is given by
\[
E({\bf x}) =  x_1 + f(x_1) x_2 + f(x_2) x_3 + ... = \sum_{n=0}^{\infty} \frac{x_{n+1}}{x_n^\al}.
\]

The variational recursion reads in this case
\[
x_{k+1} = \frac{1}{\al}\frac{x_k^{\al+1}}{x_{k-1}^{\al}}
\]
or equivalently
\[
\frac{x_{k+1}}{x_k^{\al}} = \frac{1}{\al}\frac{x_{k}}{x_{k-1}^{\al}} = \frac{1}{\al^k}\frac{x_{1}}{x_{0}^{\al}}= \frac{1}{\al^k} x_1.
\]
Therefore, for the sequences generated by the variational recursion,
with $x_1=x$, we can immediately compute the cost 
\[
E(\xx) = \sum_{n=0}^{\infty} \al^{-n} x_1 = x_1 \frac{\al}{\al -1},
\]
as a function of the initial condition $x_1=x$.

This expression indicates that $x_1$ should be as small as possible, provided 
the sequence satisfies the constraints of monotonicity and unbounded growth.

From the sequence definition, we have
\[
\frac{x_{k+1}}{x_k}= \frac{1}{\al} \left ( \frac{x_k}{x_{k-1}}\right )^{\al}
\]
or denoting the ratios by $r_k=x_k/x_{k-1}$,
\[
r_{k+1}= \al^{-1}r_k^{\al}, \,\,\, r_1 = x_1.
\]

Defining
$w_k=r_k \al^{-\frac{1}{\al-1}}$ gives
\[
w_{k+1} = w_k^{\al}.
\]
We clearly need  to take $w_1 \geq 1$, so that 
the ratios would not go to zero and the sequence $x_k$ would be monotone.
However, since we need $x_1$ to be as small as possible, we take $w_1
= 1$, resulting in $x_1 = r_1 = \al^{\frac{1}{\al-1}}$. Therefore, the minimal cost is given by 
\begin{eqnarray}\label{eq:excost}
E_0 = \frac{\al\cdot \al^{\frac{1}{\al-1}}}{\al -1}= \frac{\al^{\frac{\al}{\al-1}}}{\al -1},
\end{eqnarray}
and the optimal sequence is given by
\[
x_k = \al^{\frac{k}{\al-1}}.
\]
In a particular case of $\alpha=2$, the optimal sequence is given by 
 geometric series $x_k = 2^k$.

\subsection{Hamiltonian dynamics}
The global structure of the dynamics defined by the variational
recurrence in this homogeneous problem is shown on the Figure
\ref{fig:homo}. Here we draw the invariant curves for the trajectories
defined by $\vr$: the iterations of a point $(x_k, x_{k+1})$ found on one
of these curves, stays on it forever. The red (thick) line corresponds
to the optimal trajectory.

\vspace{5mm}

\begin{figure}[htb]
  \centering
  \includegraphics[width=.5\textwidth]{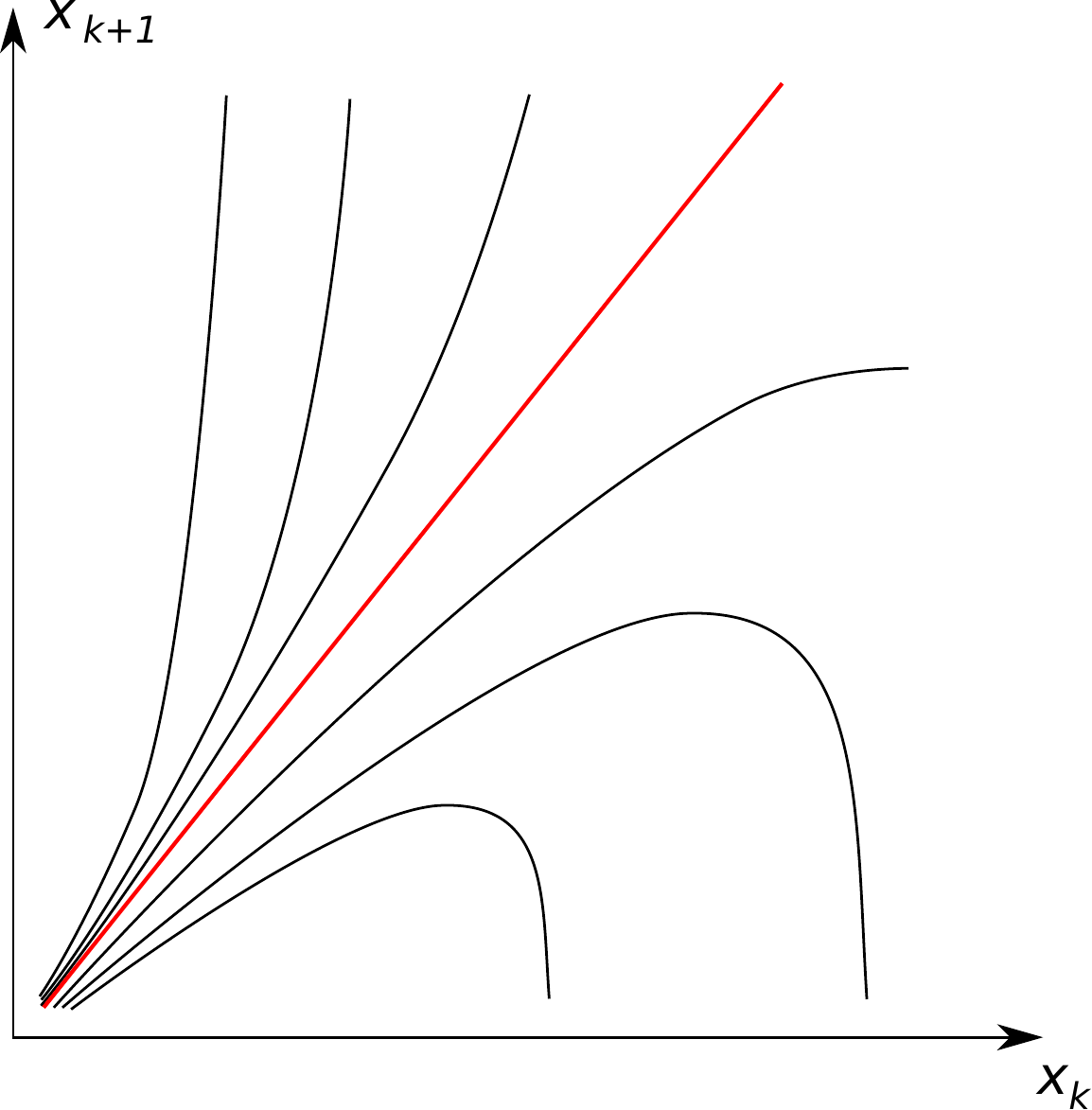}
  \caption{Phase portrait for the variational recursion for the homogeneous 
distribution. There are two regions: above the line
$x_{k+1}=\alpha^{1/(\alpha-1)}x_k$, 
where  all the orbits monotonically grow and below, where all the orbits 
lose monotonicity eventually.}
  \label{fig:homo}
\end{figure}

\vspace{5mm}
 
The qualitative dynamics in this case can be summarized as
follows:

\begin{itemize}
\item
There is a region of initial values $x_1$ where the variational recursion
stops making sense: the iterates become non-monotone. 
We will call this region {\em chaotic}\footnote{Albeit the dynamics is not really
chaotic in this particular case, we will see that this is rather
an exception.}. 
\item 
The optimal initial value is on the boundary of the chaotic set. 
\item
The growth of the optimal plan (exponential) is far slower than the
growth for generic initial values outside the chaotic region (where it
is super-exponential).
\end{itemize}

The sequences can be represented as solutions of the two dimensional nonlinear map
\begin{align*}
x_{k+1} &= r_{k+1}x_k \\
r_{k+1} &= \frac{1}{\al} r_k^{\al}.
\end{align*}

The ray $r=r^* = \al^{\frac{1}{\al-1}}$ is invariant.
Above this ray $r=r^*$, the orbits go rapidly to infinity. The orbits below $r=r^*$
are not monotone, because $r_k$ monotonically decreases to zero and while $x_k$ may
grow at first but after $r_k$ becomes less than 1, $x_k$ will be decreasing.

\section{Exponential tail distribution}
In this section we analyze in detail the prototypical case of exponential 
distribution. While, this case is sufficiently simple to allow complete
understanding, the Hamiltonian dynamics is no longer integrable. Therefore, 
the methods that we develop would apply to other cases of interest.

\subsection{Variational recursion}

We consider now several key properties of the variational recursion 
$\vr:(x,y)\mapsto (y,-f(x)/f'(y))$.

One of the basic observation is that it preserves an area form:
\begin{prp}
 The mapping $\vr$ preserves the area form $\omega=f'(x) dx\wedge dy$.
\end{prp}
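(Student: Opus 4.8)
The plan is to verify the identity $\vr^*\omega = \omega$ by a direct pullback computation, whose entire force is a single cancellation. Writing $\vr(x,y) = (X,Y)$ with
\[
X = y, \qquad Y = -\frac{f(x)}{f'(y)},
\]
I would first record the differentials $dX = dy$ and
\[
dY = -\frac{f'(x)}{f'(y)}\,dx + \frac{f(x)\,f''(y)}{f'(y)^2}\,dy,
\]
obtained by differentiating $Y$ in $x$ and in $y$ separately. Implicit throughout is that we work on the domain where $f'$ does not vanish, which is exactly where $\vr$ is defined; for the exponential tail $f'=-e^{-x}$ this is all of $\Real_+$.

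Next I would substitute into $\vr^*\omega = f'(X)\,dX\wedge dY$. Since $X=y$ we have $f'(X) = f'(y)$, and since $dX = dy$ only the $dx$-component of $dY$ survives the wedge, giving
\[
dX\wedge dY = dy\wedge\left(-\frac{f'(x)}{f'(y)}\,dx\right) = \frac{f'(x)}{f'(y)}\,dx\wedge dy.
\]
Multiplying by $f'(X) = f'(y)$, the two occurrences of $f'(y)$ cancel and leave $\vr^*\omega = f'(x)\,dx\wedge dy = \omega$. Equivalently, one may observe that $\det D\vr = f'(x)/f'(y)$ while the weight evaluated at the image point is $f'(y)$, so the pushforward of the density $f'$ is again $f'$.

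There is essentially no obstacle here: the computation is pure bookkeeping, and the only point to watch is the nonvanishing of $f'$. The result is not accidental, though, and I would relegate its explanation to a remark, since it is precisely what justifies calling the dynamics Hamiltonian in the sections that follow. The map $\vr$ is the one-step map of the discrete Lagrangian system with action $L(x,y) = y\,f(x)$, whose Euler--Lagrange equation $f(x_{n-1}) + x_{n+1}f'(x_n) = 0$ is exactly the variational recursion \eqref{eq:var_rec}. Introducing the conjugate momentum $p = -\partial_x L = -y\,f'(x)$ one finds $dx\wedge dp = -f'(x)\,dx\wedge dy = -\omega$, so that $\omega$ is, up to sign, the pullback of the canonical symplectic form under the Legendre map; its invariance is then the discrete analogue of the symplecticity of Hamiltonian flows. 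I would present the explicit computation as the proof proper and keep this interpretation as motivation.
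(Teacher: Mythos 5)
Your computation is correct: with $X=y$, $Y=-f(x)/f'(y)$ the only term of $dY$ that survives the wedge with $dX=dy$ is $-\bigl(f'(x)/f'(y)\bigr)\,dx$, and the factor $f'(X)=f'(y)$ cancels the denominator, giving $\vr^*\omega=\omega$; equivalently $\det D\vr = f'(x)/f'(y)$ balances the ratio of densities. The route differs from the paper's in an instructive way: the paper offers no computation at all, justifying the proposition solely by the general remark that for any recursion obtained by extremizing $\sum_k F(x_k,x_{k+1})$ the twist form $\frac{\partial^2 F}{\partial x\partial y}\,dx\wedge dy$ is invariant (here $F(x,y)=y\,f(x)$, so $\frac{\partial^2 F}{\partial x\partial y}=f'(x)$). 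Your primary argument is the direct pullback verification, which buys self-containedness and makes explicit the one hypothesis actually used (nonvanishing of $f'$ on the domain of $\vr$); your closing remark, identifying the discrete Lagrangian $F(x,y)=y\,f(x)$, the Euler--Lagrange form of the variational recursion, and the momentum $p=-y\,f'(x)$ with $dx\wedge dp=-\omega$, is precisely the paper's argument made precise, so in effect you supply both the computation the paper omits and the conceptual justification it gestures at. One cosmetic point: the $f''$ term in your expression for $dY$ presupposes a second derivative of $f$ that the wedge immediately discards; if you want the statement under the paper's standing assumption that $f$ is merely differentiable, it is cleaner to compute only $\partial Y/\partial x$ and note that the $dy$-component of $dY$ is irrelevant.
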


This is a rather general fact: for any recursion obtained 
by extremization of the functional 
\[
E(\xx) = \sum_{k=0}^{\infty} F(x_k, x_{k+1}),
\]
the 2-form $\frac{\partial^2 F}{\partial x \partial y} dx\wedge dy$ is
invariant with respect to the associated two-dimensional  mapping.

It is possible to explicitly give the coordinates in which the 
variational recursion $\vr$ is {\em Hamiltonian}: if we use $(s,y)$, where
$s=f(x)$
in
lieu of $(x,y)$, then 
$$
\vr:(s,y)\mapsto (f(y),-s/f'(y));
$$
it maps $[0,1]\times\Real_+$ into itself and preserves the {\em Lebesgue area} $ds\wedge dy$.
We will be referring to these coordinate system as {\em standard}.

In the standard coordinates, the variational recursion for the
exponentially distributed $\HH$ (i.e. for $f(x)=\exp(-x)$) is given by
\[
\vr:(s,y)\mapsto (e^{-y},se^y).
\]

Further, one can see that $\vr$ has a unique stationary point,
$s=e^{-1}, y=1$. One can verify that this fixed point is elliptic.

\begin{figure}[htb]
\begin{center}
\includegraphics[height=3in]{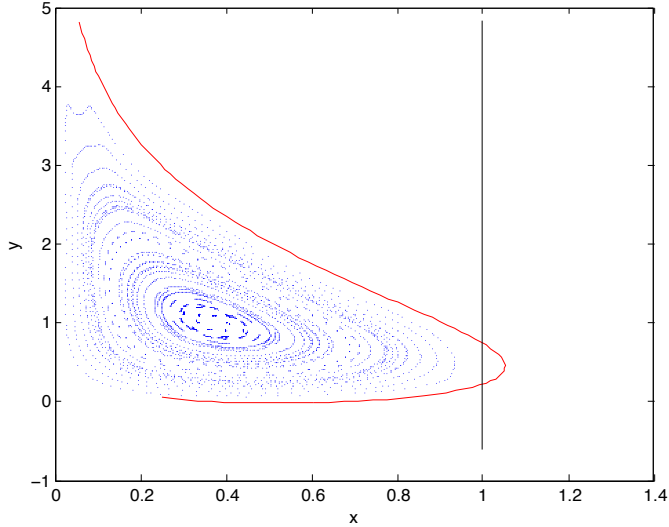}
\caption{Several orbits of the variational recursion for exponential
  distribution. The solid curve separates chaotic region from the monotonicity
  region. The region of interest is located to the left of the vertical line $x=1$. 
The monotone orbits outside of the chaotic region are not present as they 
are rapidly mapped to infinity. }
\label{fig:expo}
\end{center}
\end{figure}

\subsection{Cost functional and cost function}
\label{sec-2.4}
We already know that the optimal plan can be found only among the trajectories
satisfying the variational recursion. We will set $x_0=0$; under this
assumption the trajectories (not necessarily increasing) satisfying
the variational recursion  are parameterized by the first non-zero
term $x_1:=x$. We will be denoting the corresponding family of
trajectories as $\xx_\vr(x)=\{x_0=0, x_1=x, x_2=x_2(x),\ldots\}$. For
the exponentially distributed $\HH$, the first few terms of the family
$\xx_\vr(x)$ are given by $x_1=x; x_2=e^x; x_3=e^{e^x-x}$ and so on. \\

\noindent
{\bf Notation:}
We will use the term {\em cost functional} for
(\ref{eq:cost-functional}), 
defined on the space of all
trajectories $\xx$, while reserving  the term {\em cost function} for the
restriction of the
functional $E$ to the one-parametric curve $\xx_\vr(x)$ of solutions to variational
recursion, denoting the cost function by $E(x):=E(\xx_\vr(x))$. \\

For exponentially distributed $\HH$, the cost {\em function} is finite
on monotonic trajectories. Indeed, in this case, unless growing
without bound, the trajectory should converge to the only fixed point of
the variational recursion, which is impossible as it is an elliptic
point. If for some $K$, $x_K>1$, then for $k>K$, 
$$
x_{k+1}-x_k=\ln{x_{k+2}}\geq \ln{x_K}>0,
$$
and $x_k$ grows at least as an arithmetic progression, implying the
convergence of 
\[
E(\xx)=\sum_{k=0}^{\infty} x_{k+1}\exp(-x_k)=\sum_{k=0}^{\infty}\exp(-x_{k-1}).
\]

Now, as the cost function $E(x)$ is a function of one variable, and
we established that the optimal trajectory should be one of the family
$\xx_\vr(x)$, it might appear  that the rest is straightforward: to find the minimum of
$E(x)$ over the starting point $x_1=x$. However, if we take the formal
derivative
$$
\frac{dE}{dx}=\sum_{k=0}^{\infty} \frac{d}{dx}\left ( x_{k+1}(x)f(x_k(x) \right ),
$$ 
we will see that all the terms vanish, identically (precisely because
$\xx_\vr(x)=\{x_1(x),x_2(x),\ldots\}$ satisfies the variational
recursion). It might appear that $E(x)$ should be a constant! However, we
already computed $E(x)$ in an example in section \ref{sec-2.2}, and know that this is not the case.

The reason for this calamity is, of course, the fallacious differentiation of an
infinite sum of differentiable functions with wildly growing $C^1$ norms.

However, if we consider the {\em approximants}
$$
E^K(x)=\sum_{k=0}^K x_{k+1} f(x_k),
$$ 

they can be differentiated term by term, yielding
\begin{equation}\label{eq:diff_CK}
\frac{dE^K}{dx}(x)=x_{K+1}(x)f(x_K(x))
\end{equation}
(by telescoping).

As $E^K(x)$ approximates $E(x)$ to within $4 E_0 \, f(x_K)$, which uniformly converges to 
zero, the existence of a local minimum of $E(x)$ in an interval where $E$ is
finite would imply that the approximants 
$E^K(x)$ have local minima in that interval,
for all large enough $K$. Later we will use this observation to prove that the reduced cost function 
has optimal solution on the separatrix.

\section{Hamiltonian dynamics}
\label{sec-3}
Denote by $\phsp=\{1\geq s\geq 0, y\geq 0\}$
the phase space (in standard coordinates) 
on which the variational recursion
acts.
\subsection{Chaotic and monotone regions}
\label{sec-3.1}
\begin{dfn}
  The region $\monot_k$ of $k$-step monotonicity is defined as collection of
  points in $\phsp$ such that $k$-fold application of the $\vr$
  produces a monotonic (along $y$ coordinate) sequence. The
  intersection of all $\monot_k$ is denoted by $\monot_\infty:=\cap_k
  \monot_k$ and is called the {\em  region of monotonicity}. Its complement
  is called the {\em chaotic region}.
\end{dfn}

The boundary $\separ$ of the monotonicity  region is called the {\em
  separatrix}. It is not immediate that the separatrix is a {\em
  curve}: the monotone and chaotic regions can have rather wild
structure. However, we will see that the separatrix is indeed a smooth 
curve, and the relevant part of it can be represented as a graph of a function
in some appropriate coordinates.

\begin{figure}

\includegraphics[width=.6\textwidth]{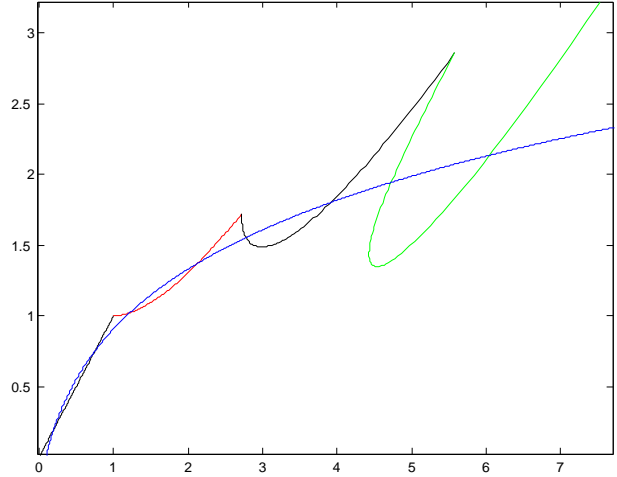}

\caption{Invariant curve and iterated initial data in the exponential case in $(y,z)$ coordinates. The long curve is the separatrix. It corresponds to the 
solid curve in Figure \ref{fig:expo}. 
The line segment with the end points $(0,0)$ and $(1,1)$ represents 
a one parameter family of the initial turning points $x_1$. Note that 
the segment intersects the separatrix at exactly two points. These two points 
are the candidates for the optimal search sequence.
The other curves are obtained by iterating the initial segment by the forward 
map.
}
\label{fig:exp_inv}
\end{figure}

\subsection{Existence of separatrix: exponential distribution}

The existence of the separatrix in the phase space for the exponentially
distributed $\HH$ is proved by applying  the standard Banach contraction 
mapping principle. 

We start by introducing more convenient coordinates in 
the phase space\footnote{Recall that $(x,y)$
represent the successive points of the trajectory $(x_k,x_{k+1})$.}  
$(x,y)\rightarrow (y,z=y-x)$. 
Thus, $z_{k+1} = x_{k+1}-x_k$ ``measures'' monotonicity of the orbits.

In these new coordinates, the mapping
$\vr$ is given by
\begin{equation}\label{eq:yz}
\vr:(y,z)\mapsto (Y,Z)=(\exp{z}, \exp{z} -y).
\end{equation}
The inverse map in these coordinates acts as
\begin{equation}\label{eq:yz-inv}
\vrinv: (Y,Z)\mapsto (y,z)= (Y-Z, \ln{Y}).
\end{equation}

The iterations of the boundary of monotonicity region $\{Z>0\}$ result
in curves $z=\phi_k(y)$, where the functions $\phi_k$ satisfy the
recursion
$$
\phi_{k+1}(Y-\phi_k(Y))=\ln(Y),
$$
or, equivalently,
$$
\phi_{k+1}(\eta)=\ln(\psi_k(\eta)),
$$
where $\psi_k$ is defined as inverse to $Y\mapsto Y-\phi_k(Y)$.

\begin{prp}\label{prp:separ}
The map $\phi_k\mapsto \phi_{k+1}$ defined above
is a contraction in the space of continuously 
differentiable positive functions with bounded derivative $0< \phi^{\prime}(y)<1/2$
for $y \geq 4$. There is a continuous limit $\phi=\lim_{k\to\infty} \phi_k$, 
which  solves the functional equation 
$$
\phi(y-\phi(y))=\ln(y)
$$
and satisfies the bound $|\phi(y)-\ln(y)|\leq 1$ on $y\in [4,\infty)$.
\end{prp}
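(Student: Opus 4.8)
The plan is to set up the map $\Phi: \phi_k \mapsto \phi_{k+1}$ as a self-map on a complete metric space of functions and verify the Banach contraction hypotheses. Let me think about what space to use and how to get the contraction constant.

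We have $\phi_{k+1}(\eta) = \ln(\psi_k(\eta))$ where $\psi_k$ is the inverse of $Y \mapsto Y - \phi_k(Y)$. So first I need the inverse to exist — that's where $0 < \phi'< 1/2$ comes in: it makes $Y \mapsto Y - \phi_k(Y)$ have derivative $1 - \phi'_k \in (1/2, 1)$, hence strictly increasing and invertible. Then $\psi_k$ has derivative $\psi'_k = 1/(1 - \phi'_k(\psi_k)) \in (1, 2)$. Then $\phi_{k+1}(\eta) = \ln \psi_k(\eta)$ and $\phi'_{k+1}(\eta) = \psi'_k(\eta)/\psi_k(\eta)$.

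Now I need to check the space is preserved: $\phi_{k+1}$ should be positive, $C^1$, and $0 < \phi'_{k+1} < 1/2$ on $[4,\infty)$. Since $\psi_k(\eta) \geq$ something like $\eta$ (roughly, since $\psi$ is near identity plus $\phi$), for $\eta \geq 4$ we'd have $\psi_k(\eta) \geq 4$ so $\phi'_{k+1} = \psi'_k/\psi_k \leq 2/4 = 1/2$. That's tight but works. Positivity of $\phi_{k+1} = \ln\psi_k$ needs $\psi_k \geq 1$, true since $\psi_k \geq 4$ roughly. Good, the derivative bound is the delicate one and $1/2$ seems chosen exactly so $2/4 = 1/2$ closes.

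**The contraction estimate** is the heart of it. I'd estimate $\|\phi_{k+1} - \tilde\phi_{k+1}\|$ in terms of $\|\phi_k - \tilde\phi_k\|$ (sup norm, or a weighted sup norm). The logarithm helps: $|\ln\psi_k - \ln\tilde\psi_k| \leq \frac{1}{\min\psi}|\psi_k - \tilde\psi_k| \leq \frac{1}{4}|\psi_k - \tilde\psi_k|$. Then I must control how the inverse $\psi$ depends on $\phi$: if $g = \mathrm{id} - \phi$ and $\tilde g = \mathrm{id} - \tilde\phi$, then $|\psi - \tilde\psi| = |g^{-1} - \tilde g^{-1}|$, and using $g(\psi) = \eta = \tilde g(\tilde\psi)$ one gets $|\psi - \tilde\psi| \leq \frac{1}{1 - 1/2}\|\phi - \tilde\phi\| = 2\|\phi-\tilde\phi\|$ (since $g' \geq 1/2$). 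Combining, $\|\phi_{k+1}-\tilde\phi_{k+1}\| \leq \frac14 \cdot 2 \|\phi_k - \tilde\phi_k\| = \frac12\|\phi_k-\tilde\phi_k\|$, a contraction.

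**The main obstacle** I anticipate is making these estimates rigorous with the *derivative* included, i.e. working in the $C^1$ norm rather than just sup norm, since I need the limit to inherit $0<\phi'<1/2$ and to be genuinely $C^1$. The inverse-function and log manipulations on the derivatives are messier; I'd likely split into (1) a sup-norm contraction giving a continuous limit $\phi$ and (2) a separate uniform bound $|\phi_{k+1}'| \leq 1/2$ propagating inductively, then use Arzelà–Ascoli on the derivatives (equicontinuous and uniformly bounded) to conclude $\phi \in C^1$ with the derivative bound preserved. The functional equation $\phi(y - \phi(y)) = \ln y$ follows by passing to the limit in $\phi_{k+1}(y - \phi_k(y)) = \ln y$ using uniform convergence. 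Finally the estimate $|\phi(y) - \ln y| \leq 1$ should come from the fixed-point equation itself: since the argument $y - \phi(y)$ differs from $y$ by $O(\ln y)$ and $\phi \approx \ln$, a direct substitution plus the derivative bound gives the $O(1)$ correction; I'd verify it as a final short calculation, possibly building it into the invariant space so that the contraction acts on $\{|\phi - \ln| \leq 1\}$ from the start.
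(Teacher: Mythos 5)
Your proposal follows essentially the same route as the paper: invert $Y\mapsto Y-\phi(Y)$ using the derivative bound $0<\phi'<1/2$, verify the class is preserved (positivity from $\psi\ge y\ge 4>1$, derivative bound from $\psi'/\psi\le 2/4=1/2$), and obtain a sup-norm contraction with constant $\tfrac14\cdot 2=\tfrac12$ by combining the Lipschitz constant of $\ln$ on $[4,\infty)$ with the stability estimate $|\psi-\tilde\psi|\le 2\|\phi-\tilde\phi\|$ for the inverses. The paper likewise proves the contraction only in the sup norm on the $C^1$ class (deferring genuine $C^1$ contraction to the next proposition) and gets $|\phi-\ln y|\le 1$ from the standard a priori bound $\|\phi-\phi_0\|\le 2\|\phi_1-\phi_0\|$ with $\phi_0=\ln$, which is the concrete version of the ``final short calculation'' you sketch.
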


By construction, the region below the separatrix $\separ$ (in $(y,z)$
coordinates) corresponds to the
non-monotonic solutions of the variational recursion, and that above 
$\separ$ correspond to monotonically increasing solutions. In other
words, $\separ$ is indeed the boundary of $\monot_\infty$.

\begin{proof}
Consider the inverse map \eqref{eq:yz-inv}.
It takes a graph $(y,\phi(y))$ into
a graph $(y,\Phi(y))$, where
\[
\Phi(\phi)(y) = \ln(w_{\phi}(y)),
\]
where $w_{\phi}(y)$ solves the equation
\[
y = w_{\phi}(y) - \phi(w_{\phi}(y)).
\]
We consider this mapping in the space of continuously differentiable functions
\[
{\bf X} = \{ \phi \in C^1 (y_0,\infty),  \phi (y)>0,
0< \phi^{\prime}(y) \leq 1/2 \}.
\]
Note, that at each iteration we have a well defined function $w=w_{\phi}(y)$ 
and that $w_{\phi}(y) > y$. Indeed, by the implicit function theorem, 
we need $\phi^{\prime}(w) \neq 1$,
which we have since $\phi^{\prime}(y) \leq 1/2$ and $w_{\phi}(y) > y$.

First, show that we can iterate indefinitely:
\[
\Phi(\phi)(y) = \ln(w_{\phi}(y)) > \ln (y) > \ln (y_0) > 0,
\]
if $y_0>1.$
Differentiating
\begin{equation}\label{eq:aprider}
\frac{d}{dy}\Phi(\phi)(y) = \frac{w_{\phi}^{\prime}(y)}{w_{\phi}(y)} = 
\frac{1}{w_{\phi}(y)} \cdot  \frac{1}{1-\phi^{\prime}(w_{\phi}(y))} \leq 
\frac{2}{w_{\phi}(y)} \leq \frac{2}{y}
\leq \frac{2}{y_0} \leq \frac 12,
\end{equation}
if $y_0>4$. Also, since $w_{\phi}^{\prime}(y) > 0$, we have 
\[
\frac{d}{dy}\Phi(\phi)(y)>0.
\]

Now, we show that the mapping $\Phi$ is a contraction 
in the space of continuous functions. 
Let $y \geq y_0$ and consider
\begin{align}\label{eq:phidiff}
|\Phi(\phi)(y) - \Phi(\psi)(y)| = |\ln(w_{\phi}(y))-\ln(w_{\psi}(y))| 
\end{align}
\[
\leq \frac{1}{\min  (w_{\phi}(y), w_{\psi}(y) )}\cdot |w_{\phi}(y)- w_{\psi}(y)| \leq
\frac{1}{y_0} |w_{\phi}(y)- w_{\psi}(y)|.
\]
Now, observe that
\[
 |w_{\phi}(y)- w_{\psi}(y)| = | \phi(w_{\phi}(y))- \psi(w_{\psi}(y))  | \leq
 |\phi(w_{\phi}(y))-\phi(w_{\psi}(y))| + |\phi(w_{\psi}(y))-\psi(w_{\psi}(y))|
\]
\[
\leq
\sup_{y\geq y_0} |\phi^{\prime}| \cdot | |w_{\phi}(y)- w_{\psi}(y)| + 
\sup_{y\geq y_0} |\phi(y)-\psi(y)|.
\]
Therefore,
\[
| w_{\phi}(y)- w_{\psi}(y) | \leq
\frac{\sup_{y\geq y_0} |\phi(y)-\psi(y)|}{1- \sup_{y\geq y_0} |\phi^{\prime}|} 
\leq 2\sup_{y\geq y_0} |\phi(y)-\psi(y)|
\]
and combining this inequality with \eqref{eq:phidiff}, we obtain
the contraction
\[
\sup_{y\geq y_0} |\Phi(\phi)(y) - \Phi(\psi)(y)| \leq
\frac{2}{y_0} \sup_{y\geq y_0} |\phi(y)-\psi(y)| \leq
\frac{1}{2} \sup_{y\geq y_0} |\phi(y)-\psi(y)|,
\]
assuming again that $y_0>4$.

As usual, in the contraction argument, the distance between initial guess
$\phi_0(y) = \ln(y)$ and the limit $\phi(y)$ is bounded by
$||\phi-\phi_0|| \leq 2||\phi_1-\phi_0||$. Consider
\[
|\phi_1(y)-\phi_0(y)| = | \ln(t(y)) - \ln(y)|,
\]
where $y=t(y)-\ln(t(y))$ with $y\geq 4$. Thus,
\[
 | \ln(t(y)) - \ln(y)| \leq \frac{1}{y} |t(y)-y| \leq 
\frac{1}{y} |t^{\prime}(y)-1|\cdot y =  |t^{\prime}(y)-1| =
\frac{1}{|t(y)-1|} ,
\]
where we used the derivative of the inverse function.
Since, we assume that $y\geq 4$ which implies  then  $t(y) > 2$, we have
\[
|\phi(y)-\phi_0(y)| \leq 2|\phi_1(y)-\phi_0(y)| \leq 1. 
\]

\end{proof}

Now, we verify that the obtained separatrix is actually smooth. We need this
property as we later prove that the cost function increases away from 
the separatrix.
In fact, the separatrix is possibly an analytic function, see the appendix. 

\begin{prp}
The separatrix is a continuously
differentiable function on the interval $[13,\infty)$ satisfying  the bound
\[
\frac{d}{dy} \phi(y) \leq \frac{2}{y}.
\]
\end{prp}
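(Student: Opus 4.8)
The plan is to upgrade the $C^0$ contraction of Proposition \ref{prp:separ} to the level of first derivatives, so that the fixed point $\phi$ inherits $C^1$ regularity and the pointwise bound. The crucial simplification is that the derivative estimate itself has already been produced: inequality \eqref{eq:aprider} shows that every iterate satisfies $0<\phi_k'(y)\le 2/y$ on the relevant range. Hence, once we know that $\phi_k\to\phi$ in $C^1$ (uniformly on compact subsets of $[13,\infty)$, together with derivatives), the limit is automatically continuously differentiable and the estimate $\phi'(y)\le 2/y$ survives the passage to the limit. So the whole problem reduces to establishing $C^1$-convergence of the iterates produced by $\Phi$.

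To this end I would differentiate the fixed-point relation. Writing $w_{\phi}=w_{\phi}(y)$ for the solution of $y=w_{\phi}-\phi(w_{\phi})$ as in the previous proof, implicit differentiation gives $w_{\phi}'=1/(1-\phi'(w_{\phi}))$ and therefore
\[
\frac{d}{dy}\Phi(\phi)(y)=\frac{1}{w_{\phi}(y)\bigl(1-\phi'(w_{\phi}(y))\bigr)}.
\]
Regard this as a recursion on the derivatives $p_k:=\phi_k'$, namely $p_{k+1}(y)=\bigl[w_{\phi_k}(y)\bigl(1-p_k(w_{\phi_k}(y))\bigr)\bigr]^{-1}$. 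For a fixed base function $w_{\phi}$, the map $p\mapsto [w_{\phi}(1-p(w_{\phi}))]^{-1}$ contracts in the supremum norm: its Lipschitz constant in $p$ is bounded by $\sup_y \frac{1}{w_{\phi}(y)(1-p(w_{\phi}(y)))^{2}}\le \frac{1}{y_0}\,(1-2/y_0)^{-2}$, using $w_{\phi}\ge y\ge y_0$ and $p\le 2/w_{\phi}\le 2/y_0$. Taking $y_0=13$ makes this factor (it equals $13/121\approx 0.11$), together with the lower-order error terms discussed below, strictly less than one; this is the role of the constant in the statement. This is exactly a fiber contraction sitting over the base contraction $\phi\mapsto\Phi(\phi)$: the base already converges ($\phi_k\to\phi$ uniformly by Proposition \ref{prp:separ}), and the fiber map contracts uniformly in the derivative variable. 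The Fiber Contraction Theorem (Hirsch--Pugh) then yields $p_k\to p_\ast$ uniformly, and since $\phi_k\to\phi$ with $\phi_k'=p_k\to p_\ast$ one identifies $p_\ast=\phi'$, so $\phi\in C^1[13,\infty)$.

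The hard part is precisely the coupling hidden in the argument $w_{\phi}(y)$: the new derivative at $y$ depends on the old derivative evaluated at the $\phi$-dependent point $w_{\phi}(y)$, so a naive attempt to contract in a pure $C^1$ norm produces an error term of the form $|p(w_{\phi_k}(y))-p(w_{\phi_{k-1}}(y))|$ that is not controlled by $\|p_k-p_{k-1}\|_\infty$ alone. Two routes resolve this. Either invoke the fiber contraction principle as above, which needs only continuity of $\phi\mapsto w_{\phi}$ (supplied by $|w_{\phi}-w_{\psi}|\le 2\|\phi-\psi\|_\infty$ from the previous proof) and the uniform fiber contraction just established. Or, more by hand, observe that the offending drift $|w_{\phi_k}-w_{\phi_{k-1}}|\le 2\|\phi_k-\phi_{k-1}\|_\infty$ is geometrically small, and bound $|p(w_{\phi_k})-p(w_{\phi_{k-1}})|$ by a uniform modulus of continuity of the $p_k$; the latter follows from a uniform bound on $\phi_k''$, obtained by differentiating the displayed formula once more and checking that the induced map on second derivatives is again contracting, with bounded inhomogeneous term $\le (y_0-2)^{-2}$, on $[13,\infty)$. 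In either route uniform $C^1$-convergence follows, and letting $k\to\infty$ in $0<\phi_k'(y)\le 2/y$ gives the asserted bound $\frac{d}{dy}\phi(y)\le 2/y$.
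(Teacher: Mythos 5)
Your proposal is correct, and it rests on the same two pillars as the paper's own proof: the a priori derivative bound \eqref{eq:aprider}, which gives $0<\phi_k'(y)\le 2/y$ for every iterate and hence for the $C^1$ limit, and a uniform bound on the second derivatives of the iterates, which is what tames the composition term. You correctly isolate the crux --- the derivative recursion evaluates the old derivative at the $\phi$-dependent point $w_\phi(y)$, so $\|p_k-p_{k-1}\|_\infty$ alone does not close the estimate --- and this is precisely the term the paper controls via $|\psi'(w_{\phi})-\psi'(w_{\psi})|\le \|\psi''\|_0\,|w_\phi-w_\psi|$, after first checking that the bound $|\phi''|\le 1$ is preserved by $\Phi$. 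The difference is in the packaging: the paper proves a single contraction estimate in the norm $\|\phi\|_0+\|\phi'\|_0$ on the invariant set $\{|\phi''|\le 1\}$, whereas you either invoke the Hirsch--Pugh fiber contraction theorem over the $C^0$ contraction of Proposition \ref{prp:separ}, or run a perturbed-contraction (Cauchy sequence) argument on the derivative sequence with geometrically decaying inhomogeneity $2\|\phi_k-\phi_{k-1}\|_\infty$. The fiber-contraction route buys a cleaner bookkeeping --- the fiber Lipschitz constant $13/121$ is small on its own and need not be balanced against the $C^0$ part of a combined norm --- but note that your claim that it ``needs only continuity of $\phi\mapsto w_\phi$'' understates the hypotheses: $p\circ w_\phi$ is not sup-norm continuous in $\phi$ for an arbitrary continuous $p$, so you must restrict the fiber to an equi-Lipschitz (hence complete and continuity-friendly) class, which is exactly where the second-derivative bound re-enters. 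Your second, by-hand route is essentially the paper's argument rearranged as a perturbation series rather than a fixed-point statement in $C^1$.
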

\begin{proof}
Now we consider contraction in the space of continuously differentiable
functions with the norm 
\[
||\phi||_1:=\sup_{y\geq y_0} |\phi| + \sup_{y\geq y_0} |\phi^{\prime}|
\]
and with the bound
\[
|\phi^{\prime \prime}(y)| \leq 1.
\]
We will also use the notation
\[
||\phi||_0:=\sup_{y\geq y_0} |\phi|.
\]

Using the definition of $\Phi(\phi)$ and of $w_{\phi}$, we calculate
\[
\Phi^{\prime \prime}(\phi)(y) = \frac{w^{\prime \prime}_{\phi}}{w_{\phi}} - 
\frac{(w^{\prime}_{\phi})^2}{w^2_{\phi}}
\]
and
\[
w_{\phi}^{\prime \prime}= \frac{\phi^{\prime \prime}(w_{\phi})(w^{\prime}_{\phi})^2  }{1-\phi^{\prime}(w_{\phi})}.
\]
Recalling that for $y_0 \geq 4$, we have $0<\phi^{\prime}(y)<1/2$ and 
$1< w_{\phi}^{\prime}(y) <2$ so that
\[
|w_{\phi}^{\prime \prime}(y)| \leq 8 |\phi^{\prime \prime}(y)| \leq 8.
\]
Next, we have
\[
|\Phi^{\prime \prime}(\phi)(y)| \leq \frac{|w_{\phi}^{\prime \prime}(y)| }{y_0} + \frac{4}{y_0^2}.
\]
Taking, {\rm e.g.}  $y_0=10$, we can ensure that the last expression is bounded by 1.

Now, we prove that we indeed have contraction 
\[
||\Phi(\phi(y)) - \Phi(\psi(y))||_1 = 
\sup_{y\geq y_0} | \Phi(\phi(y)) - \Phi(\psi(y))| + 
\sup_{y\geq y_0} |\Phi^{\prime}(\phi(y)) - \Phi^{\prime}(\psi(y)) |.
\]
We already know that
\[
\sup_{y\geq y_0} | \Phi(\phi(y)) - \Phi(\psi(y))| \leq 
\frac{2}{y_0}|\phi-\psi|_0 \leq \frac{2}{y_0}|\phi-\psi|_1.
\]

Now, we estimate
\[
|\Phi^{\prime}(\phi(y)) - \Phi^{\prime}(\psi(y))| = \left |  
\frac{w_{\phi}^{\prime}}{w_{\phi}} - \frac{w_{\psi}^{\prime}}{w_{\psi}} \right | \leq
\frac{ |w_{\psi}|\cdot |  w_{\phi}^{\prime}  - w_{\psi}^{\prime} | + 
|w_{\phi}^{\prime}|\cdot |w_{\psi}-w_{\phi}|   }{|w_{\phi}||w_{\psi}|}. 
\]

Using the estimates obtained in the proof of Proposition \ref{prp:separ}, we have
\[
\frac{|w_{\phi}^{\prime}| }{|w_{\phi}||w_{\psi}|}  \leq \frac{2}{y_0^2}
\]
and
\[
|w_{\phi}(y)-w_{\psi}(y)| \leq 2 \sup_{y\geq y_0} |\phi(y) -\psi(y)|.
\]
On the other hand, differentiating the identity 
\[
y = w_{\phi}(y) - \phi(w_{\phi}(y))
\]
and using triangle inequalities, we can estimate the difference 
\[
| w_{\phi}^{\prime}  - w_{\psi}^{\prime} |  \leq 
|\phi^{\prime}(w_{\phi})|\cdot |w_{\phi}^{\prime}  - w_{\psi}^{\prime}| + 
|w_{\psi}^{\prime}| ( |\phi^{\prime}(w_{\phi}) -  \psi^{\prime}(w_{\phi}|  +  
|    \psi^{\prime}(w_{\phi})-\psi^{\prime}(w_{\psi})  |  ).
\]
The first difference on the right hand-side can be absorbed into the left hand-side as we did in the proof of Proposition \ref{prp:separ}. 
The second difference is estimated by
\[
| \phi^{\prime}(w_{\phi}) -  \psi^{\prime}(w_{\phi}) | \leq ||\phi-\psi||_1
\]
and the third one,
\[
|\psi^{\prime}(w_{\phi}) - \psi^{\prime}(w_{\psi})| \leq ||\psi^{''}||_0\cdot |w_{\phi}-w_{\psi}| \leq |w_{\phi}-w_{\psi}|,
\]
where $|w_{\phi}-w_{\psi}|$ has been estimated in Proposition \ref{prp:separ}.

Combining these inequalities, we obtain
\[
|\Phi^{\prime}(\phi(y)) - \Phi^{\prime}(\psi(y))| \leq 
   \left ( \frac{12}{y_0} + \frac{4}{y_0^2} \right ) 
||\phi-\psi||_1.
\]
By taking sufficiently large $y_0$, e.g. $y_0=13$ we obtain contraction in $C^1$.
Having established continuous differentiability of $\phi$, the  bound follows from the apriori estimate  \eqref{eq:aprider}.

\end{proof}

\begin{rem}
By iterating the inverse map, one can show that the separatrix is smooth on a 
larger  interval $[1,\infty)$.
\end{rem}

\subsection{Properties of the separatrix}
\begin{itemize}

\item
By construction, the region below the separatrix $\separ$ (in $(y,z)$
coordinates) corresponds to the
non-monotonic solutions of the variational recursion, and that above 
$\separ$ corresponds to monotonically increasing solutions. In other
words, $\separ$ is indeed the boundary of $\monot_\infty$.

\item
Using functional equation, it is possible to obtain logarithmic 
series expansion of the function $\phi$ defining the
separatrix near
$y=\infty$ (the derivation can be found in the appendix):
\[
\phi(y)=\ln(y)+\frac{\ln(y)}{y}+\ldots
\] 

\item In the standard coordinates, it is instructive to consider the separatrix as the stable
  invariant manifold of a topological saddle ``at infinity''. The
  intuition behind this picture underlies the construction of the
  separatrix.
\end{itemize}

\section{Cost function and optimal trajectories}
\label{sec-4}

To understand the properties of the cost function and its
approximations $E^N(x)$ we will need a standard  trick from hyperbolic dynamics.
There it is used to find fragile objects (invariant foliations) from robust ones
(invariant cones), see e.g. \cite{katok}.

\subsection{Consistent cone fields }
\label{cons_fields}
We will continue to work in $(y,z)$ coordinates.

We will refer to a pair of nowhere collinear vector fields
$(\eta(y,z),\xi(y,z))$ (or, rather, to the convex cone in the tangent spaces
spanned by these vector fields)
as the {\em cone field} $K_{(y,z)}$, and to the vector fields $\eta,\xi$ as the
{\em generators} of $K_{(y,z)}$. We will say that the cone field $K_{(y,z)}$ is {\em
  consistent} at $(y,z)$, if the variational recursion {\bf R} maps it into itself, i.e.
$$
D\vr K_{(y,z)}\subset K_{\vr(y,z)};
$$
here $D\vr$ is the differential of $\vr$. For exponential $\HH$, it is 
given in the coordinates
$(y,z)$ by \\

\[
D \vr (y,z) = \ba{0} & {e^{z}}\\{-1} & {e^z}\ea \\
\]

\vspace{3mm}

We will call a subset $A$ of the quadrangle  $\{y\geq 0, z\geq 0\}$
a {\em $\vr$-stable set} if it is mapped into itself, {\em i.e.} $\vr (A) \subset A$.

\begin{comment}
{\color{red}
\begin{prp}
If on a $\vr$-stable set $A \subset \{y\geq 0, z\geq 0\}$
there exists a consistent cone field, then there exists on $A$ a
$\vr$-invariant field of direction.
\end{prp}
\begin{proof}
Consider the cone field 
$$
K_\infty(y,z):=\lim_k (D\vr)^{-k}K_{\vr^k(y,z)},
$$
that is the preimages under the iterations of $\vr$ of $K$. The
consistency condition implies that $(D\vr)^{-k}K_{\vr^k(y,z)}$ is an
increasing sequence of convex cones in $T_{(y,z)}\Real^2$, which
therefore has a limit, either a half-plane, or a sharp convex cone.

The field of direction is constructed as a generator of $K_\infty$,
which is manifestly $\vr$-invariant.
\end{proof}
}

\subsubsection{Remark}
In general, thus constructed invariant cone fields loose smoothness
(becoming H\"older instead), but in our situation, the estimates
similar to those proving the differentiability of the separatrix can
be used to prove smoothness of $K_\infty$. We will return to this
topic in a separate paper.

\end{comment}

\begin{prp}
The subset of the quadrangle ${\bf A} = \{y \geq 0, z \geq \max(0,\phi(y))\}$ is a 
{\bf R}-stable set.
\end{prp}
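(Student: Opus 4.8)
The plan is to prove $\vr(\mathbf{A})\subseteq\mathbf{A}$ by checking the two defining inequalities on the image point. Writing $\vr(y,z)=(Y,Z)$ with $Y=e^{z}$ and $Z=e^{z}-y$ as in \eqref{eq:yz}, the condition $Y\geq 0$ is automatic, and in fact $Y=e^{z}\geq 1$ because $z\geq 0$ on $\mathbf{A}$; in particular $\phi(Y)$ makes sense. Thus the entire content is the estimate $Z\geq\max(0,\phi(Y))$, and the tools will be the two properties of the separatrix already established in Proposition \ref{prp:separ} and the following one: the functional equation $\phi(y-\phi(y))=\ln y$ and the strict monotonicity of $\phi$ (which follows from $0<\phi'(y)\leq 2/y$). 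As a guiding remark, the statement is conceptually the assertion that $\mathbf{A}$ is the monotonicity region $\monot_\infty$, which is forward-invariant almost tautologically: the forward orbit of $\vr(p)$ is a tail of the forward orbit of $p$, so if the latter stays in $\{z\geq 0\}$ so does the former. The work below is to turn this into the explicit inequalities for the set $\mathbf{A}$ as defined.

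First I would record the elementary bound $\phi(y)>\ln y$ on the domain of $\phi$: since $\phi>0$ we have $y-\phi(y)<y$, and monotonicity together with the functional equation gives $\ln y=\phi(y-\phi(y))<\phi(y)$. Then comes the core step, the estimate $Z\geq\phi(Y)$. Here I exploit that $\phi$ is strictly increasing to transport the desired inequality across $\phi$:
\[
Z\geq\phi(Y)\iff e^{z}-y\geq\phi(e^{z})\iff y\leq Y-\phi(Y)\iff \phi(y)\leq\phi\bigl(Y-\phi(Y)\bigr).
\]
Applying the functional equation at $Y$ gives $\phi\bigl(Y-\phi(Y)\bigr)=\ln Y=z$, so the rightmost inequality reads $\phi(y)\leq z$, which is exactly the hypothesis $(y,z)\in\mathbf{A}$ in the range where $\phi(y)$ is the binding constraint. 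Since moreover $\phi(Y)>0$ on its domain, the inequality $Z\geq\phi(Y)$ simultaneously yields $Z>0$, and hence $Z\geq\max(0,\phi(Y))$, completing the generic case.

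The main obstacle is the domain bookkeeping: the functional equation and monotonicity of $\phi$ were established only on an interval $[y_0,\infty)$ (extended to $[1,\infty)$ by the Remark), so one must verify that each argument appearing above, namely $y$, $Y=e^{z}$, and the inner point $Y-\phi(Y)$, actually lies where $\phi$ is available before invoking these properties. The transport chain is clean once $z$ is not too small, since then $Y=e^{z}$ and $Y-\phi(Y)$ sit comfortably inside the domain; the delicate part is the transition strip near the left endpoint of the separatrix, together with the region $y<1$ where $\max(0,\phi(y))=0$ and the constraint is only $z\geq 0$. In that strip I would argue directly rather than by transport: from $z\geq 0$ one has $Y=e^{z}\geq 1$, the inner point $Y-\phi(Y)$ lies in the domain by construction of $\phi$, and $y<1\leq Y-\phi(Y)$ immediately gives $y\leq Y-\phi(Y)$, hence $Z\geq\phi(Y)$ and $Z>0$ as before. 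Collecting the generic case and this boundary analysis establishes $Z\geq\max(0,\phi(Y))$ for every $(y,z)\in\mathbf{A}$, which is the claim.
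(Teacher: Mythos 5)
Your main ``transport'' computation is sound and is essentially an explicit rendering of what the paper's own (very terse) proof appeals to: on the part of $\mathbf{A}$ where the binding constraint is $z\geq\phi(y)$ and all the relevant arguments lie in the domain of $\phi$, the chain
\[
Z\geq\phi(Y)\ \Longleftrightarrow\ y\leq Y-\phi(Y)\ \Longleftrightarrow\ \phi(y)\leq\phi\bigl(Y-\phi(Y)\bigr)=\ln Y=z
\]
correctly reduces forward-invariance to the hypothesis $z\geq\phi(y)$, using only the monotonicity of $\phi$ and the functional equation (equivalently, the invariance of the separatrix under $\vr$). This matches the paper, which asserts that the claim ``follows from invariance of the separatrix'' together with the behaviour of the two boundary pieces $\{y=0,\ z\geq0\}$ and $\{0\leq y\leq y^*,\ z=0\}$.

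The boundary case is where your argument genuinely breaks. You assume that $\max(0,\phi(y))=0$ for $y<1$ and that $Y-\phi(Y)\geq 1$ whenever $Y=e^{z}\geq 1$; both are false. The asymptotics $\phi(y)\approx\ln y$ holds only at infinity; continued down to small $y$, the separatrix crosses the diagonal twice, at $x_-\approx0.195$ and $x_+\approx0.747$, lies above the diagonal in between (so $\phi(y)>y>0$ there), and meets $z=0$ at a point $y^*<x_-$. Hence $\max(0,\phi(y))=0$ only for $y\leq y^*$, and, by invariance of $\separ$, the image of $(y^*,0)$ is $(1,1-y^*)\in\separ$, i.e.\ $\phi(1)=1-y^*>0.8$, so that $Y-\phi(Y)=y^*<0.2$ at $Y=1$. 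Under your reading of $\mathbf{A}$ the proposition would in fact be false: $(0.5,0)$ would belong to $\mathbf{A}$, yet its image $(1,0.5)$ has $0.5<\phi(1)$. The repair is short: since $u\mapsto u-\phi(u)$ is increasing (the paper gives $0<\phi'\leq1/2$) and $1-\phi(1)=y^*$, every $Y=e^{z}\geq1$ satisfies $Y-\phi(Y)\geq y^*\geq y$ for $y$ in the strip $\{0\leq y\leq y^*\}$ (and a fortiori on the ray $\{y=0\}$), which restores $Z\geq\phi(Y)>0$ there. With that substitution, and with the generic case restricted to $y\geq y^*$, your proof is complete and coincides in substance with the paper's.
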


In other words, all the points in the positive quadrangle and above the separatrix do not leave that region under the action of {\bf R}. This statement follows 
from invariance of the separatrix and that the ray $\{y=0,z\geq 0\}$ and 
the segment $\{0\leq y\leq y^*,z=0\}$ are mapped inside ${\bf A}$, where $(y^*,0)$
is the point where the separatrix intersects $y$-axis.

Now we will construct an explicit consistent cone field for the
exponential  \HH. It is in fact just the constant field, spanned by the
tangent vectors $\eta=(1,2)$ and $\xi=(2,1)$

A straightforward  computation
shows that in the region $\{z>\ln 4\}$ the cone field generated by
$\eta$ and $\xi$ is consistent, and we deduce

\begin{prp}
In the region $z\geq \ln 4$ above the separatrix, which is a $\vr$-stable set there exists a consistent cone field transversal to the vertical vector
field $(0,1)$.
\end{prp}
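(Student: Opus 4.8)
The plan is to take the cone field to be the \emph{constant} field $K$ spanned by $\eta=(1,2)$ and $\xi=(2,1)$, i.e. the convex cone of tangent directions $(u,v)$ whose slope $v/u$ lies in $[1/2,2]$, equivalently those with $v\le 2u$ and $u\le 2v$. I would verify the three assertions of the statement separately: consistency $D\vr K\subset K$, transversality of $K$ to the vertical field $(0,1)$, and $\vr$-stability of the region on which these hold.

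For consistency it suffices to check the two generators. Because $D\vr(y,z)$ is linear and $K$ is a convex cone, $D\vr(a\eta+b\xi)=a\,D\vr\eta+b\,D\vr\xi$ lies in $K$ for all $a,b\ge 0$ as soon as $D\vr\eta,D\vr\xi\in K$. Using $D\vr(y,z)=\begin{pmatrix}0&e^{z}\\-1&e^{z}\end{pmatrix}$ I compute $D\vr\eta=(2e^{z},\,2e^{z}-1)$ and $D\vr\xi=(e^{z},\,e^{z}-2)$. Testing against the inequalities $v\le 2u$ and $u\le 2v$, the first image lies in $K$ for every $z\ge 0$, while the second satisfies $v\le 2u$ automatically but satisfies $u\le 2v$, i.e. $e^{z}\le 2(e^{z}-2)$, precisely when $e^{z}\ge 4$. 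Hence the binding generator is $\xi$, and consistency holds exactly on $\{z\ge\ln 4\}$. Transversality is immediate: any nonzero $a\eta+b\xi=(a+2b,\,2a+b)$ with $a,b\ge 0$ has strictly positive first coordinate, so $(0,1)\notin K$ and $K$ is transversal to the vertical field everywhere.

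The one point requiring care, and the main obstacle, is that the region on which consistency holds, ${\bf A}\cap\{z\ge\ln 4\}$, be itself $\vr$-stable, so that iterating the cone keeps us where the computation is valid. Stability of the part ${\bf A}$ above the separatrix is the preceding proposition; it remains to see that the constraint $z\ge\ln 4$ is preserved. For a point above the separatrix the image $(Y,Z)=(e^{z},e^{z}-y)$ is again above the separatrix, whence $Z\ge\phi(Y)=\phi(e^{z})$. I would then establish $\phi(Y)>\ln Y$ for $Y\ge 4$ directly from the functional equation $\phi(y-\phi(y))=\ln y$: since $w\mapsto w-\phi(w)$ is increasing (derivative $1-\phi'\in[1/2,1)$) and unbounded, there is $y>Y$ with $y-\phi(y)=Y$, and then $\phi(Y)=\phi(y-\phi(y))=\ln y>\ln Y$ because $y=Y+\phi(y)>Y$.

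Putting these together, when $z\ge\ln 4$ we have $Y=e^{z}\ge 4$, so $Z\ge\phi(e^{z})>\ln(e^{z})=z\ge\ln 4$. In particular the $z$-coordinate strictly increases along orbits above the separatrix, so ${\bf A}\cap\{z\ge\ln 4\}$ is indeed $\vr$-stable; on it the constant cone field $K$ is consistent and transversal to $(0,1)$, which is the assertion. The linear-algebra verification of consistency is routine once the correct generators are chosen; the genuinely substantive step is the monotonicity estimate $\phi>\ln$ extracted from the functional equation, which is what upgrades stability of ${\bf A}$ to stability of the smaller region where the cone computation applies.
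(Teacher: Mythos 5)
Your proof is correct and takes essentially the same route as the paper: the paper constructs the very same constant cone field spanned by $\eta=(1,2)$ and $\xi=(2,1)$ and simply asserts that ``a straightforward computation'' yields consistency on $\{z\geq\ln 4\}$, which is exactly the generator check you carry out (and your identification of $\xi$ as the binding generator, with threshold $e^{z}\geq 4$, matches). You additionally verify the $\vr$-stability of the region $\{z\geq\ln 4\}$ above the separatrix via the estimate $\phi(Y)>\ln Y$ extracted from the functional equation $\phi(y-\phi(y))=\ln y$ together with $0<\phi'\leq 1/2$; the paper asserts this stability in the statement but never proves it, so on that point your write-up is more complete than the original.
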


\begin{comment}
To prove the transversality to $\eta$ we  just have to notice that $\eta$
itself is not invariant ($D\vr\eta=(1,1)$ ). The tangency
of the constructed invariant field of direction to $\separ$ follows
from the fact that the separatrix is smooth and a tangent line to $\separ$ is mapped to another tangent line. 
\end{comment}

\subsection{Monotonicity of the cost function on intervals of regularity}
\label{sec-4.1}

Now we are ready to prove the key fact about the cost function $E(x)$.
Consider the ray $\rr:=\{(t,t), 0<t<\infty\}$ of initial conditions
for the variational recursion. We will
say that $t_*$ is a regular point, if some vicinity of $t_*$ in the
ray $\rr$ belongs to the monotone region $\monot_\infty$. In other
words, for the initial data $x_0=0, x_1=t$, where $t$ is close to
$t_*$, the variational recursion generates an increasing trajectory,
for which the cost function is a well defined function $E(x)$.

It turns out that $x_*$ {\em cannot be a local extremum} of $E(x)$.

\begin{prp}\label{prp:momotonicity}
In $(y,z)$ coordinates,   if the region above the separatrix supports a consistent
  cone field $K$, with $\eta$ being one of the generators, and $\eta$  is not $\vr$-invariant then 
on any interval $I=(y_-, y_+)\subset \rr$ in the intersection of the ray of
  initial data with the monotone region  $\monot_\infty$ the function $E(x)$ is monotone. 
\end{prp}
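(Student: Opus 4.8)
The plan is to show that on a regularity interval $I=(y_-,y_+)\subset\rr$ the cost function $E(x)$ has no interior local extremum, which forces monotonicity. The key analytic tool is the formula for the derivative of the approximants, $\frac{dE^K}{dx}(x)=x_{K+1}(x)f(x_K(x))$, established earlier by telescoping. Since this is a product of positive quantities, each approximant $E^K$ is strictly increasing \emph{with respect to the parameter $x$}; the subtlety is that differentiation is taken along the ray $\rr$ of initial data, so I must track how the tangent vector to $\rr$ is transported by $\vr$ and relate the sign of $\frac{dE^K}{dx}$ to the geometry of the consistent cone field $K$.

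First I would set up the variation: differentiating the family $\xx_\vr(x)$ with respect to the initial parameter produces a tangent (Jacobi) field along each orbit, obtained by pushing the tangent vector of $\rr$ at $(t,t)$ forward under the differentials $D\vr$. The derivative $\frac{dE^K}{dx}$ is, up to the positive factor coming from the chain rule along $\rr$, governed by the \emph{direction} of this transported tangent vector relative to the vertical field $(0,1)$. This is exactly where the consistent cone field $K$ enters: by the preceding proposition, on the $\vr$-stable region $\{z\ge\ln 4\}$ above the separatrix the cone field generated by $\eta=(1,2)$ and $\xi=(2,1)$ is mapped into itself, and it is transversal to the vertical. Hence the transported tangent vector stays inside the image cones, never aligning with $(0,1)$, so its horizontal component keeps a fixed sign all along the orbit.

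Second, I would convert this fixed-sign statement into strict monotonicity of the $E^K$. The transversality of $K$ to the vertical vector field guarantees that $x_{K+1}(x)$ and $x_K(x)$ vary with a definite sign as $x$ moves through $I$, so $\frac{dE^K}{dx}=x_{K+1}f(x_K)>0$ is bounded away from zero on compact subintervals, uniformly in $K$. The hypothesis that $\eta$ is \emph{not} $\vr$-invariant is what prevents the transported direction from being frozen; combined with consistency it yields that the cone image is a proper subcone, so the sign of the horizontal displacement is genuinely strict rather than merely nonnegative.

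Finally I would pass to the limit. The approximants converge uniformly, $|E(x)-E^K(x)|\le 4E_0\,f(x_K)\to 0$ on $I$ (from the estimate in Section~\ref{sec-2.4}), and each $E^K$ is strictly increasing on $I$ for all large $K$; a uniform limit of functions that are eventually strictly increasing, with uniformly positive derivative on compacta, is itself monotone, so $E(x)$ is monotone on $I$. \textbf{The main obstacle} I anticipate is the term-by-term control of the variation: one cannot naively differentiate the infinite sum (as the excerpt warns, the $C^1$ norms of the terms blow up), so the entire argument must be carried out at the level of the finite approximants $E^K$ and the cone field must supply \emph{uniform-in-$K$} transversality estimates before the limit is taken. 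Making the ``bounded away from zero'' claim uniform in $K$ — rather than for each fixed $K$ — is the delicate point, and it rests squarely on the $\vr$-invariance of the cone field giving a persistent, non-degenerate opening angle along the whole forward orbit.
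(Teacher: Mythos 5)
Your proposal follows essentially the same route as the paper: telescoping gives $\frac{dE^N}{dx}=f(x_N)\frac{dx_{N+1}}{dx}$, so a critical point of an approximant would force the $D\vr$-transported tangent vector of the ray $\rr$ to become vertical, which the consistency of the cone field together with its transversality to $(0,1)$ forbids; hence no $E^N$ has a critical point on a compact subinterval of $I$ and the limit $E$ is monotone. Two small corrections: the derivative of the approximant is $f(x_K)\,\frac{dx_{K+1}}{dx}$, not the automatically positive product $x_{K+1}f(x_K)$ quoted in your second paragraph (which would make the claim trivial), and no uniform-in-$K$ lower bound on the derivative is needed — a pointwise limit of monotone functions is monotone, which dissolves the ``delicate point'' you flag — while the genuinely unproved step, in your argument as in the paper's, is the initial transient, namely that the tangent vector $(1,1)$ of $\rr$ is carried by finitely many iterates of $D\vr$ into the region $\{z\geq\ln 4\}$ where consistency of the cone field is actually established (the paper defers this to numerics).
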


\begin{proof}
Consider partial sums $E^N(x)$ which approximate $E(x)$:

\begin{equation}
  \label{eq:approx}
  E^N(x)=\sum_{m=0}^N f(x_m)x_{m+1}, 
\end{equation}
where the trajectory $\xx_\vr(x)$ solves the variational recursion. It is immediate that $E^N(x)$ is a smooth function of $x$, if $f(x)$ is.

As $E^N(x)$ converge pointwise to $E(x)$, non-monotonicity of $E$ on $I$ would
imply that for some compact subinterval $J\subset I$,
all the functions $E^N$ have a critical point on $J$ provided $N$ is sufficiently large.

By (\ref{eq:diff_CK}), 
$$
\frac{dE^N}{dx}=f(x_N)\frac{dx_{N+1}}{dx},
$$
and criticality 
$\frac{dE^N}{dx}=0$ is possible only if ${dx_{N+1}}/{dx}=0$ at some
point $x_*$ of $J$. 

As the $N$-th iteration of the initial point
$(y,z)=(x_1, x_1-x_0)$ is $(x_{N+1}, x_{N+1}-x_N)$, the vanishing of
${dx_{N+1}}/{dx}=0$
means that in $(y,z)$ coordinates the $N$-th iteration by $D\vr$ of the tangent vector to the ray $\rr$ {\em
 is vertical}.

However, the line of the initial conditions is the diagonal $(y=t,z=t)$. Computer simulations, see Figure \ref{fig:exp_inv}, show that after several iterates, the ray gets mapped into the cone field (above $z=\ln 4$).

 As the $K$ is consistent above the
separatrix, the iterations of these tangent vectors under $D\vr$ will
still be in the interior of $K$, while the vertical vector field is
the generator of $K$. Hence, ${dx_{K+1}}/{dx}$ cannot vanish on $J$,
ensuring that vicinity cannot contain a local extremum of $E$.
\end{proof}

Therefore, the cost function can only achieve minimum at one of the 
points of intersection of the separatrix with the line of initial conditions.

\subsection{Simulations and optimal trajectories}
\label{sec-4.2}
In this section we present results of numerical computation of 
the cost function for the one-sided search problem. We also explain how our theory fits with these observations.

Figure \ref{fig:cost}  shows the plots of the cost of the trajectories $\xx_\vr$
for the exponentially distributed $\HH$, evaluated at both chaotic and monotone
trajectories. The simulation was stopped either when the trajectories
increased beyond some large threshold, or after a fixed number of
steps (the former trigger would correspond to monotone trajectories;
the latter to chaotic ones).
\begin{figure}[htb]
  \centering

  \includegraphics[height=2.5in, width=.48\textwidth]{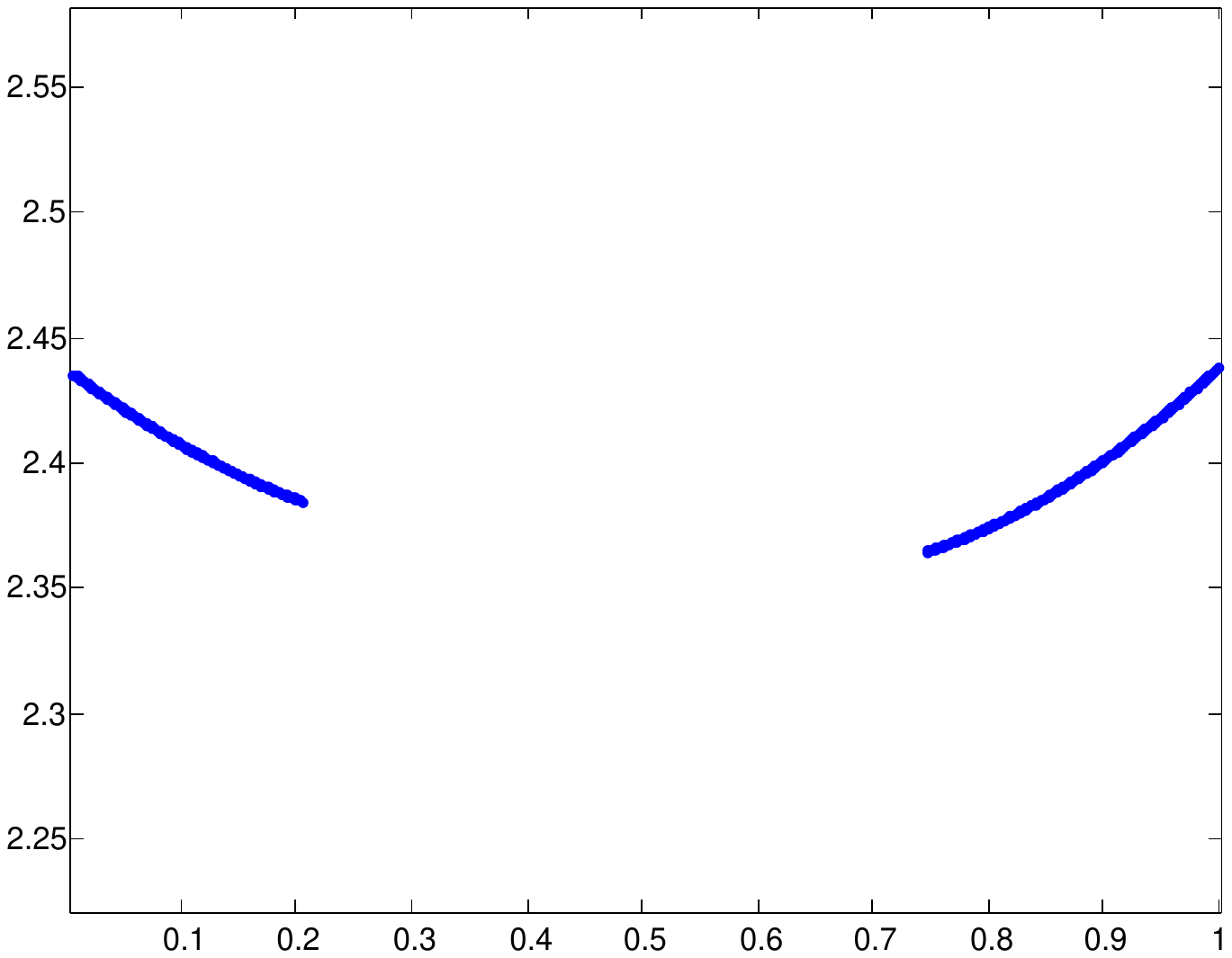}
  \includegraphics[height=2.5in, width=.48\textwidth]{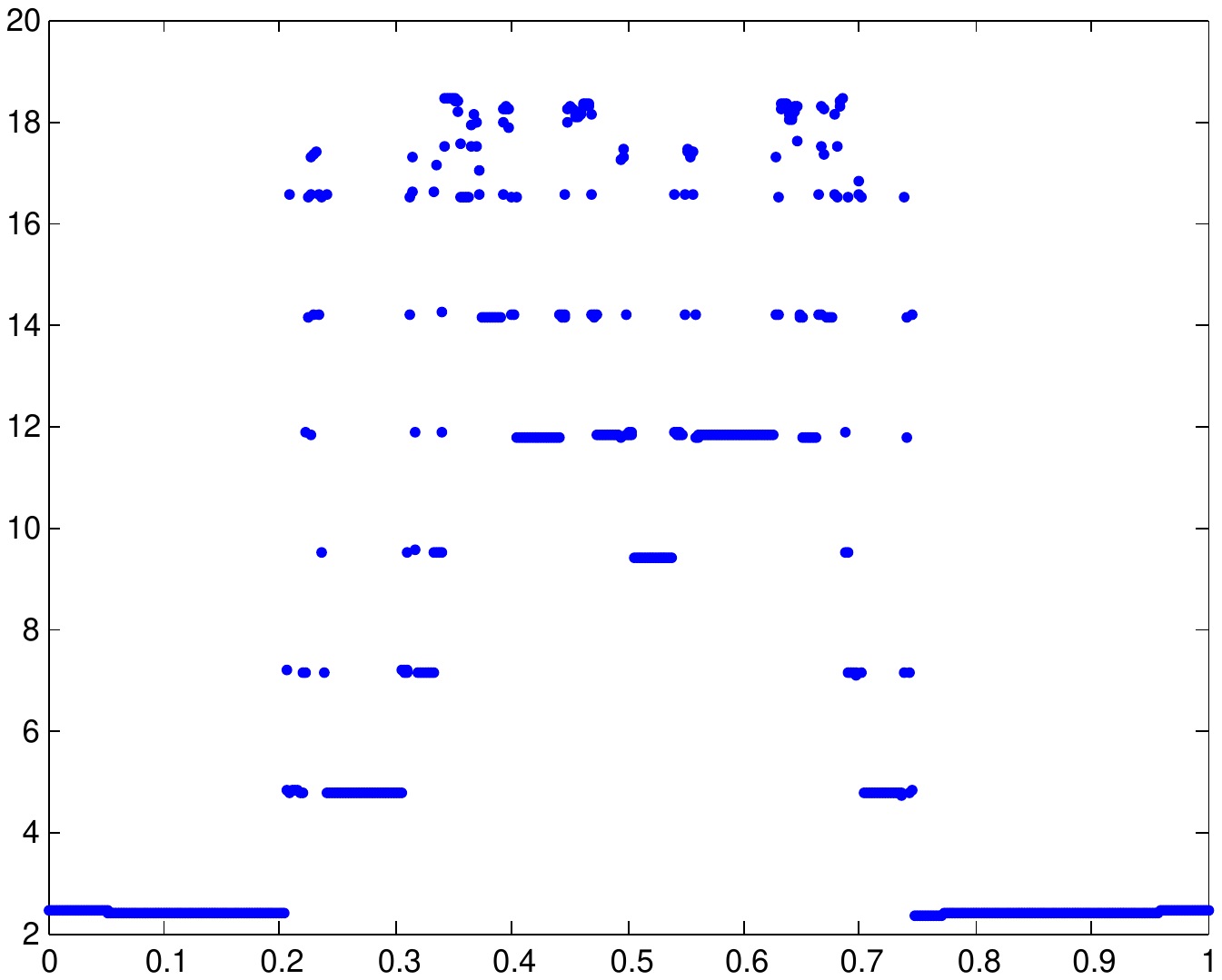}
  \caption{Numerically evaluated cost function $E(x)$ for exponentially distributed 
    $\HH$. Right display shows also results for chaotic region (stopped
    after a fixed number of iterations). Left display is a
    magnification of the right one, showing only the results over the
     region of monotonicity.}
  \label{fig:cost}
\end{figure}

The monotonicity of the cost over the left and right  intervals is apparent.
The separatrix $\separ$ intersects the ray of
initial conditions $\rr$ at two points, $x_+\approx 0.7465...$ and
$x_-\approx .1954...$
(compare with Figure \ref{fig:expo}). Between the points, the
initial conditions are in the chaotic region. The monotonicity of $E$
outside of the chaotic region means that one of the two initial
values, $x_+$ or $x_-$ should generate the optimal
trajectory. Numerically, $x_+$ wins: $E(x_+)\approx  2.3645<E(x_-)\approx 2.3861$.

\section{Conclusion}
\label{sec-5}
We developed a geometric approach to the Linear Search Problem via discrete time
Hamiltonian dynamics, which explains some of the hidden structure of the cost function.
The rapid decay of the tail distribution function translates into  
hyperbolicity of the underlying Hamiltonian dynamics. The latter is  defined 
by the variational recursion which plays 
a key role in
the characteristics of the optimal search trajectory. In particular, 
hyperbolicity implies the existence of separatrix which divides
the regular and chaotic regions, and the optimal search trajectory
needs to start on the separatrix: the chaotic region cannot contain optimal 
orbits, while in the regular region the orbits father away from separatrix 
have higher cost (monotonicity of the cost function). 

While this scenario is proved in this note only for a specific case of
exponential tail distribution function, we anticipate that 
for other distributions with sufficiently fast decay, the same
type of results, including the existence of separatrix and
monotonicity of cost function in the region of monotonicity, will hold.
Some of this hope is supported by partial results, see the appendix.

We plan to return to this more general classes of distributions in a
follow-up paper, where we also plan to address the phenomenon of
separatrix slow-down (the growth of trajectories on separatrix is
slower than that in the interior of the  region of monotonicity).

There are other open questions arising in the context of  Hamiltonian 
dynamics based approach to the search problem. Extending the
set of analyzed distributions to those with bounded
support is a natural task.

We also expect that 
in  the search on rays, where the corresponding Hamiltonian map is higher
 dimensional, hyperbolicity will also play an important role and
 higher dimensional separatrix (unstable manifold) can be found. 
It is expected that optimal search plan would still be restricted
 to the unstable manifold.

\pagebreak

\appendix

\section{Series expansions}

The expansion near $x=\infty$ for the separatrix given by 
\[
\phi(x-\phi(x)) = \ln(x)
\]
leads to logarithmic series 
\[
\phi(x) = \sum_{n=0}^{\infty} \frac{Q_n(\ln(x))}{x^n}.
\]
The first three terms are given by 
\[
\phi(x) = \ln (x) + \frac{\ln(x)}{x}+ \frac{1}{x^2} 
\left (
\frac 12 + \frac 34 \ln(x) -\frac 13 \ln^2(x) 
\right )+ ...
\]

To justify this expansion, we need 
\begin{lem}
The equation $x = t(x)-\ln t(x)$ has a smooth solution for sufficiently
large $x$
\[
t(x) = x + \ln x + O \left (\frac{\ln x}{x} \right ).
\]
\end{lem}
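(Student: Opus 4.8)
The plan is to read the equation $x = t - \ln t$ as the problem of inverting the smooth function $g(t):=t-\ln t$. First I would record that $g$ is $C^\infty$ on $(0,\infty)$ with $g'(t)=1-1/t$, so $g'(t)>0$ precisely for $t>1$; thus $g$ is strictly increasing on $(1,\infty)$ and, since $g(t)\to\infty$ as $t\to\infty$, it maps $(1,\infty)$ diffeomorphically onto $(g(1),\infty)=(1,\infty)$. By the inverse function theorem, the inverse $t=t(x)$ exists, is smooth, and is strictly increasing on all of $x\in(1,\infty)$ (in particular for all sufficiently large $x$), with $t(x)\to\infty$ as $x\to\infty$. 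This settles existence and smoothness of the branch we want.

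Next I would extract the asymptotics by a bootstrap from the fixed-point form $t = x + \ln t$, which is just the defining equation rewritten. Since $\ln t>0$ for $t>1$ we immediately get $t>x$, and combined with $t\to\infty$ this gives the crude estimate $t\sim x$ as $x\to\infty$, hence $\ln t \sim \ln x$. Substituting once more,
\[
t = x + \ln t = x + \ln\!\left(x+\ln t\right) = x + \ln x + \ln\!\left(1+\frac{\ln t}{x}\right).
\]
Because $\ln t/x\to 0$, the last term is $\ln(1+o(1)) = O(\ln t/x)=O(\ln x/x)$, using $\ln t = \ln x + O(\ln x / x)$ from the previous step. This yields exactly
\[
t(x) = x + \ln x + O\!\left(\frac{\ln x}{x}\right),
\]
as claimed.

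I expect the only genuinely delicate point to be bookkeeping in the error term: one must verify that $\ln t = \ln x + O(\ln x/x)$ feeds back consistently, i.e.\ that the bootstrap closes rather than merely giving a self-referential estimate. This is handled by noting that $t/x = 1 + (\ln t)/x = 1 + O(\ln x/x)$, so $\ln(t/x)=O(\ln x/x)$ and $\ln t = \ln x + \ln(t/x) = \ln x + O(\ln x/x)$; plugging this into $\ln(1+\ln t/x)$ gives the stated remainder with no circularity. Everything else is routine, and the argument in fact justifies pushing the expansion to higher order by iterating the same substitution, which is what underlies the logarithmic series for $\phi$ displayed above.
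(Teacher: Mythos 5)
Your proposal is correct, and it takes a somewhat different route from the paper. The paper posits the ansatz $t(x)=x+\ln x+r(x)$ directly, substitutes to obtain the fixed-point equation $r=\ln\bigl(1+\tfrac{\ln x}{x}+\tfrac{r}{x}\bigr)$, and invokes the contraction mapping principle for $r$, which delivers existence and the error bound in one stroke (and mirrors the functional-equation technique used for the separatrix itself, iterating naturally to the full logarithmic series in the appendix). You instead separate the two tasks: existence and smoothness come from the elementary observation that $g(t)=t-\ln t$ is a strictly increasing smooth bijection of $(1,\infty)$ onto itself, so the inverse function theorem applies globally on that branch; the asymptotics then follow from a two-step bootstrap of $t=x+\ln t$. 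Your version is more self-contained (no function-space fixed point needed) and makes the smoothness claim explicit, which the paper's one-line proof leaves implicit. One small gloss worth tightening: $t>x$ together with $t\to\infty$ does not by itself yield $t\sim x$; you should note that $x=g(t)=t\,(1-\tfrac{\ln t}{t})\sim t$, or equivalently bound $t\leq 2x$ for large $t$ using $\ln t\leq t/2$, before concluding $\ln t\sim\ln x$. With that line inserted, the bootstrap closes exactly as you describe.
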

\begin{proof}
Let us write 
\[
t(x) = x + \ln x + r(x)
\]
and substitute in the equation. After some simplifications, we have
\[
r = \ln \left ( 1+\frac{\ln x}{x} +\frac{r(x)}{x}   \right ).
\]
Application of the contraction mapping principle  to $r(x)$ gives the required error estimate.
\end{proof}

Now, we prove
\begin{prp}
\[
\phi(x) = \ln x +O \left ( \frac{\ln x}{x} \right ).
\]
\end{prp}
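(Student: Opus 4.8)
The plan is to bootstrap the crude a priori estimate $|\phi(y)-\ln y|\le 1$ from Proposition~\ref{prp:separ} into the sharper bound, using the functional equation $\phi(y-\phi(y))=\ln y$ together with the preceding Lemma on the solution $t(x)$ of $x=t(x)-\ln t(x)$. The whole argument is a single-pass refinement: feed the $O(1)$ bound back through the functional equation and read off the improved error.

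First I would recast the functional equation in inverse form. Since the differentiability estimate established above gives $\phi'(y)\le 2/y<1$ for large $y$, the map $y\mapsto y-\phi(y)$ is strictly increasing and hence invertible; let $w(x)$ denote its inverse, so that $x=w(x)-\phi(w(x))$ and the functional equation becomes simply $\phi(x)=\ln w(x)$. Because $\phi>0$ we have $w(x)>x$, so for $x$ large the a priori bound applies at the point $w(x)$ and yields $\phi(w(x))=\ln w(x)+O(1)$. Substituting this into $x=w(x)-\phi(w(x))$ gives $x=w(x)-\ln w(x)+O(1)$, that is $w(x)-\ln w(x)=x+O(1)$.

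Next I would compare this with the defining relation $x=t(x)-\ln t(x)$ of the Lemma. Using that $g(s)=s-\ln s$ is increasing with $g'(s)=1-1/s\to 1$, a mean-value estimate turns the $O(1)$ discrepancy $g(w)-g(t)=O(1)$ into $w(x)=t(x)+O(1)=x+\ln x+O(1)$. Feeding this back into $\phi(x)=\ln w(x)$ and expanding the logarithm, $\phi(x)=\ln\!\big(x+\ln x+O(1)\big)=\ln x+\ln\!\big(1+\tfrac{\ln x}{x}+O(\tfrac1x)\big)=\ln x+O\!\big(\tfrac{\ln x}{x}\big)$, which is the claim.

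The expansion of the logarithm and the error bookkeeping are routine; the one point deserving care is the comparison step, where an $O(1)$ discrepancy in the equation $s-\ln s=x+O(1)$ must be shown to produce only an $O(1)$ discrepancy in the solution $w$. This is exactly where the near-unit derivative of $g(s)=s-\ln s$ (equivalently, the invertibility coming from $\phi'<1$) is used, and it is also the reason the crude a priori bound $|\phi-\ln|\le 1$ is already sharp enough to close the argument in a single pass rather than requiring an iterated refinement.
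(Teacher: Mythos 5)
Your proof is correct, but it takes a genuinely different route from the paper's. The paper treats $\ln x$ as an iterate of the contraction $\Phi$ from Proposition~\ref{prp:separ}: it computes the next iterate explicitly as $\ln t(x)$ (with $t$ from the preceding Lemma), bounds the gap between these two consecutive iterates by $C\ln x/x$, and then invokes the standard fixed-point estimate $\|\phi-\phi_1\|\le 2\|\phi_2-\phi_1\|$ to transfer that bound to the limit. You instead bootstrap: you apply the already-established crude bound $|\phi-\ln|\le 1$ at the point $w(x)>x$, deduce $w(x)-\ln w(x)=x+O(1)$, compare with $t(x)-\ln t(x)=x$ via a mean-value estimate for $g(s)=s-\ln s$ (whose derivative is bounded below by $1/2$ for $s\ge 2$), and read off $\phi(x)=\ln w(x)=\ln x+O(\ln x/x)$. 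Both arguments rest on the same Lemma giving the asymptotics of the inverse of $s\mapsto s-\ln s$. Your version has one concrete advantage: to make the paper's final contraction step yield a \emph{pointwise decaying} error, rather than merely a uniform $O(\sup_x \ln x/x)$ constant, one must check that $\Phi$ contracts in the sup norm weighted by $x/\ln x$ --- a detail the paper leaves implicit; your single-pass bootstrap through the functional equation sidesteps this entirely, at the cost of taking the a priori estimate $|\phi-\ln|\le 1$ as input, which is indeed available from Proposition~\ref{prp:separ}.
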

\begin{proof}
Consider the first two iterations by ${\bf R}^{-1}$ of $\phi_0:=(x=t, y=0)$,
\[
\phi_1:=(x=t, y=\ln t), \phi_2:= (x= t-\ln t, y = \ln t).
\]
They can be represented as graphs $y=\phi_1(x),y=\phi_2(x)$ for 
sufficiently large $x$. Note that $\phi_1(x) = \ln (x)$, while 
$\phi_2(x) = \ln t(x)$, where $x=t(x)-\ln t(x)$.

Now, using the above lemma we estimate  
\[
|\phi_2(x)-\phi_1(x)| = |\ln t(x) - \ln x| = 
|\ln \left ( x+\ln x + r(x)   \right ) -\ln x|=
\left | \ln \left ( 1+\frac{\ln x}{x} +\frac{r(x)}{x}   \right ) \right | 
\leq C \, \frac{\ln x}{x}
\]
Applying contraction mapping principle, we obtain the desired estimate 
\[
|\phi(x) - \ln x| \leq C \, \frac{\ln x}{x}.
\]
\end{proof}

\begin{thm}
The mapping {\bf R} restricted to the separatrix takes the form
\[
x_{n+1} = x_n + \ln(x_n) + O(\ln(x_n)/x_n)
\] 
\end{thm}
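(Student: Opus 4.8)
The plan is to combine the functional equation characterizing the separatrix with the dynamics of $\vr$ restricted to it. Recall from Proposition \ref{prp:separ} that the separatrix is the graph $z=\phi(y)$ where $\phi$ solves $\phi(y-\phi(y))=\ln(y)$, and that it is invariant under $\vr$. In $(y,z)$ coordinates the map acts by $\vr:(y,z)\mapsto(\exp z,\exp z-y)$, so a point on the separatrix is $(y_n,z_n)=(x_n,\phi(x_n))$ with $x_n=y_n$ being precisely the turning-point coordinate we want to track. The $y$-coordinate updates as $y_{n+1}=\exp(z_n)=\exp(\phi(y_n))$, hence the one-dimensional dynamics on the separatrix is exactly
\[
x_{n+1}=\exp(\phi(x_n)).
\]
So the theorem reduces to substituting the asymptotic expansion $\phi(x)=\ln x+O(\ln x/x)$, established in the preceding Proposition, into this formula.

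First I would write $x_{n+1}=\exp(\phi(x_n))=\exp\!\big(\ln x_n+(\phi(x_n)-\ln x_n)\big)=x_n\exp\!\big(\phi(x_n)-\ln x_n\big)$. Setting $\epsilon_n:=\phi(x_n)-\ln x_n$, the Proposition gives $|\epsilon_n|\leq C\ln(x_n)/x_n$, which tends to $0$ as $x_n\to\infty$ (the regime that is relevant, since on the separatrix the orbit escapes to infinity). Then I would expand $x_n e^{\epsilon_n}=x_n(1+\epsilon_n+O(\epsilon_n^2))=x_n+x_n\epsilon_n+O(x_n\epsilon_n^2)$. The middle term is $x_n\epsilon_n$, which is $O(\ln x_n)$; but to match the stated form I need the sharper fact that $x_n\epsilon_n=\ln(x_n)+o(\ln x_n)$, i.e. that the leading behavior of $\phi(x)-\ln x$ is $\ln(x)/x$ with the correct constant $1$, not merely $O(\ln x/x)$. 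This is supplied by the sharper expansion $\phi(x)=\ln x+\frac{\ln x}{x}+O(1/x^2\cdot\mathrm{polylog})$ recorded earlier in the appendix; with it, $x_n\epsilon_n=\ln x_n+O(\ln^2 x_n/x_n)$. The quadratic remainder contributes $O(x_n\cdot(\ln x_n/x_n)^2)=O(\ln^2 x_n/x_n)$, which is absorbed into the $O(\ln x_n/x_n)$ error once we observe $\ln^2 x_n/x_n=o(\ln x_n/x_n)\cdot\ln x_n$— so I would be slightly careful here and state the remainder honestly as $O(\ln^2(x_n)/x_n)$, or simply invoke the cruder $O(\ln(x_n)/x_n)$ error as claimed by treating the $\ln^2/x$ term as dominated once constants are folded in.

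Assembling the pieces yields $x_{n+1}=x_n+\ln(x_n)+O(\ln(x_n)/x_n)$, which is the assertion. The main obstacle is bookkeeping of the error terms: the naive bound $\phi-\ln x=O(\ln x/x)$ is only strong enough to pin down the $x_n+\ln x_n$ leading terms if one verifies that the coefficient of $\ln(x)/x$ in $\phi$ is exactly $1$, so that $x_n\cdot(\phi(x_n)-\ln x_n)$ produces $\ln(x_n)$ with coefficient $1$. Thus the only genuinely substantive step is importing the first-order term of the logarithmic series expansion for $\phi$ from the appendix, rather than the bare $O$-estimate; everything else is a one-line Taylor expansion of the exponential. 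I would therefore structure the proof as: (i) derive the reduced dynamics $x_{n+1}=\exp(\phi(x_n))$ from invariance of the separatrix, (ii) expand the exponential, and (iii) substitute the known asymptotics of $\phi$, tracking the orders of the remainder.
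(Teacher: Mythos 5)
Your reduction of the dynamics on the separatrix to the one-dimensional iteration $x_{n+1}=\exp(\phi(x_n))$ is correct, and the overall strategy (invariance of the separatrix plus the asymptotics of $\phi$) is the same as the paper's. The gap is in the error bookkeeping, and you have in fact put your finger on it yourself but then waved it away incorrectly. Writing $x_{n+1}=x_n e^{\epsilon_n}$ with $\epsilon_n=\phi(x_n)-\ln x_n$ forces you to multiply the error in the asymptotics of $\phi$ by $x_n$. Even with the two-term expansion $\phi(x)=\ln x+\frac{\ln x}{x}+O(\ln^2 x/x^2)$, you get $x_n\epsilon_n=\ln x_n+O(\ln^2 x_n/x_n)$ and $x_n\epsilon_n^2=O(\ln^2 x_n/x_n)$, so your route proves only $x_{n+1}=x_n+\ln x_n+O(\ln^2 x_n/x_n)$. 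This is strictly weaker than the stated $O(\ln x_n/x_n)$: the ratio of the two error terms is $\ln x_n\to\infty$, so the $\ln^2(x_n)/x_n$ term cannot be ``dominated once constants are folded in'' --- that closing clause of your second paragraph is false. To recover the stated bound along your route you would need the $1/x^2$ term of the logarithmic series explicitly, and you would have to verify a cancellation between $Q_2(\ln x)/x^2$ and the $\tfrac12(\ln x/x)^2$ term coming from the exponential; nothing you have quoted guarantees such a cancellation.

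The paper avoids the amplification altogether by using the other form of the invariance relation: on the separatrix the $z$-coordinate of the image point gives $x_{n+1}-x_n=\phi(x_{n+1})$, i.e. $x_{n+1}-x_n=\ln(x_{n+1})+\rho(x_{n+1})$ with $\rho=O(\ln x/x)$, which is then solved for $x_{n+1}$ via the implicit function theorem. Here the error in $\phi$ enters additively, not multiplied by $x_n$; one only needs $x_{n+1}-x_n=O(\ln x_n)$ (immediate from $|\phi-\ln|\leq 1$) to convert $\ln x_{n+1}$ into $\ln x_n+O(\ln x_n/x_n)$, and the crude bound $\phi=\ln+O(\ln/x)$ from the preceding proposition already yields the claimed remainder. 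I suggest you restructure step (i) to use $x_{n+1}-x_n=\phi(x_{n+1})$ rather than $x_{n+1}=\exp(\phi(x_n))$; steps (ii)--(iii) then simplify and the gap closes.
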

\begin{proof}
The separatrix is given by 
\[
\phi(x) = \ln(x) + \rho \left ( \frac{\ln(x)}{x} \right )
\]
for $x\rightarrow \infty$.

Then, using the forward map representation $(x_{n+1},y_{n+1})= (\exp y_n, x_{n+1}-x_n)$, we have
\[
\ln(x_{n+1}) + \rho (x_{n+1}) = x_{n+1}-x_n, 
\]
where $\rho(x) = O(\ln x/x)$ is a smooth function. 
Applying the implicit function theorem and estimating the error term, we obtain the result.

\end{proof}

\begin{thm}
The asymptotics of the mapping restricted to the separatrix is given by
\[
x_n = n(\ln(n) + \ln(\ln (n)))+r_n,
\]
where $r_n$ is a sequence satisfying
\[
|r_{n+1}-r_n| \leq C. 
\]
\end{thm}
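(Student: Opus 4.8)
The plan is to regard the map restricted to the separatrix as the first-order recursion
\[
x_{n+1}-x_n=\ln x_n+O\!\left(\frac{\ln x_n}{x_n}\right)
\]
supplied by the previous theorem, to first pin down the growth rate $x_n\asymp n\ln n$, and only then to estimate the increments of the remainder $r_n:=x_n-a_n$, where $a_n:=n(\ln n+\ln\ln n)$. Since the increments $x_{n+1}-x_n$ are positive for large $n$, the sequence is eventually monotone and $x_n\to\infty$, so all the asymptotic manipulations below are legitimate for $n$ large; the assertion $|r_{n+1}-r_n|\le C$ then only needs to be checked in this regime, the finitely many initial terms being absorbed into $C$.

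First I would determine the growth rate through a logarithmic-integral comparison. Set $F(x)=\int_2^x dt/\ln t$, so that $F'=1/\ln$. Over the short interval $[x_n,x_{n+1}]$, whose length is $\ln x_n\,(1+o(1))$, the integrand varies only by a relative factor $1+O(1/x_n)$, whence
\[
F(x_{n+1})-F(x_n)=\frac{x_{n+1}-x_n}{\ln x_n}\bigl(1+O(1/x_n)\bigr)=1+O(1/x_n).
\]
Summing and applying Ces\`aro averaging to the vanishing error gives $F(x_n)=n(1+o(1))$, and since $F(x)\sim x/\ln x$ this yields $x_n/\ln x_n\sim n$. Writing $L_n:=\ln x_n$, the last relation reads $L_n-\ln L_n=\ln n+o(1)$; because $g(L)=L-\ln L$ is increasing with $g(L)\sim L$, its inverse satisfies $g^{-1}(y)\sim y$, so $L_n\sim\ln n$ and then $\ln L_n=\ln\ln n+o(1)$. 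Substituting back I obtain the two-term expansion
\[
\ln x_n=\ln n+\ln\ln n+o(1).
\]

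With the growth rate in hand, the claim reduces to a direct computation of $r_{n+1}-r_n=(x_{n+1}-x_n)-(a_{n+1}-a_n)$. A Taylor (or mean-value) expansion of the smooth function $t\mapsto t(\ln t+\ln\ln t)$, whose second derivative is $O(1/t)$, gives
\[
a_{n+1}-a_n=\ln n+\ln\ln n+1+\frac{1}{\ln n}+O(1/n).
\]
Combining this with the recursion for $x_{n+1}-x_n$ and the expansion of $\ln x_n$, and noting that $\ln x_n/x_n\asymp 1/n\to0$, all the variable terms cancel and
\[
r_{n+1}-r_n=o(1)-1-\tfrac{1}{\ln n}+O(1/n)\longrightarrow -1,
\]
so the increments are bounded, which is the assertion. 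The main obstacle is the bootstrap in the logarithm — inverting $x_n/\ln x_n\sim n$ to the genuine two-term form $\ln x_n=\ln n+\ln\ln n+o(1)$ with honest error control — since the exact cancellation in the last display relies on the correctness of the \emph{second} term of that expansion, not merely the leading order; the monotonicity of $g(L)=L-\ln L$ is precisely what makes this inversion rigorous.
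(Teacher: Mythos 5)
Your proposal is correct, and its final step is the same computation the paper intends: write $x_n=a_n+r_n$ with $a_n=n(\ln n+\ln\ln n)$, substitute into $x_{n+1}-x_n=\ln x_n+O(\ln x_n/x_n)$, and watch the leading terms cancel. The genuine difference is that you supply the a priori growth estimate that the paper's one-line proof silently assumes: the cancellation $\ln x_n-(a_{n+1}-a_n)=O(1)$ only works once one knows $\ln x_n=\ln n+\ln\ln n+o(1)$, and your logarithmic-integral comparison ($F(x)=\int_2^x dt/\ln t$, $F(x_{n+1})-F(x_n)=1+O(1/x_n)$, hence $x_n/\ln x_n\sim n$, then the bootstrap through the increasing function $g(L)=L-\ln L$) is exactly the missing ingredient; without it the paper's "after some cancellations" is circular. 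Your version also yields the sharper conclusion $r_{n+1}-r_n\to -1$, whereas the paper records only $r_{n+1}=r_n+1+O(1)$ — a statement whose "$+1$" is absorbed into the $O(1)$ anyway and whose sign your computation corrects (since $a_{n+1}-a_n=\ln n+\ln\ln n+1+\tfrac{1}{\ln n}+O(1/n)$ exceeds $\ln x_n$ by $1+o(1)$). In short: same decomposition and same target identity, but your preliminary comparison with $\int dt/\ln t$ turns the paper's formal substitution into a proof, at the cost of about a page of routine asymptotics.
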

\begin{proof}
Substitute the expansion of $x_n$ in the recurrent relation 
\[
x_{n+1} = x_n + \ln(x_n) + O(\ln(x_n)/x_n),
\] 
then after some cancellations, we obtain that $r_{n+1}=r_n +1 + O(1)$ which 
implies the result.
\end{proof}

\begin{comment}
{\color{red}
Now, we consider the dynamics of ${\bf R}-$stable set above the separatrix. There is a ``boundary'' layer near the separatrix. 
Under the iterations of ${\bf R}$, due to the invariant foliations close to $(1,1)$ direction with eigenvalue $e^y$, the 
distance to the separatrix growth at least as $e^{ny} \sim x^n$.

Therefore, if the initial distance to separatrix is of order $\delta$, after $\frac{-\ln \delta}{\ln x}$ iterates the orbit will leave the  boundary layer.  

When the orbit is sufficiently far from the separatrix, i.e. $y_n \gg \ln(x_n)$, then the growth is exponential
$y_{n+1} = e^{y_n}$.
}
\end{comment}

%\iffalse
\section{Two-sided Gaussian distribution: Beck-Bellman problem}
We consider the two-sided search on the real line with
Gaussian probability distribution function as in the original 
Beck-Bellman problem and we show numerically that the same canonical structure 
persists: separatrix intersecting the curve of initial turning points.

\begin{figure}

\includegraphics[width=100mm]{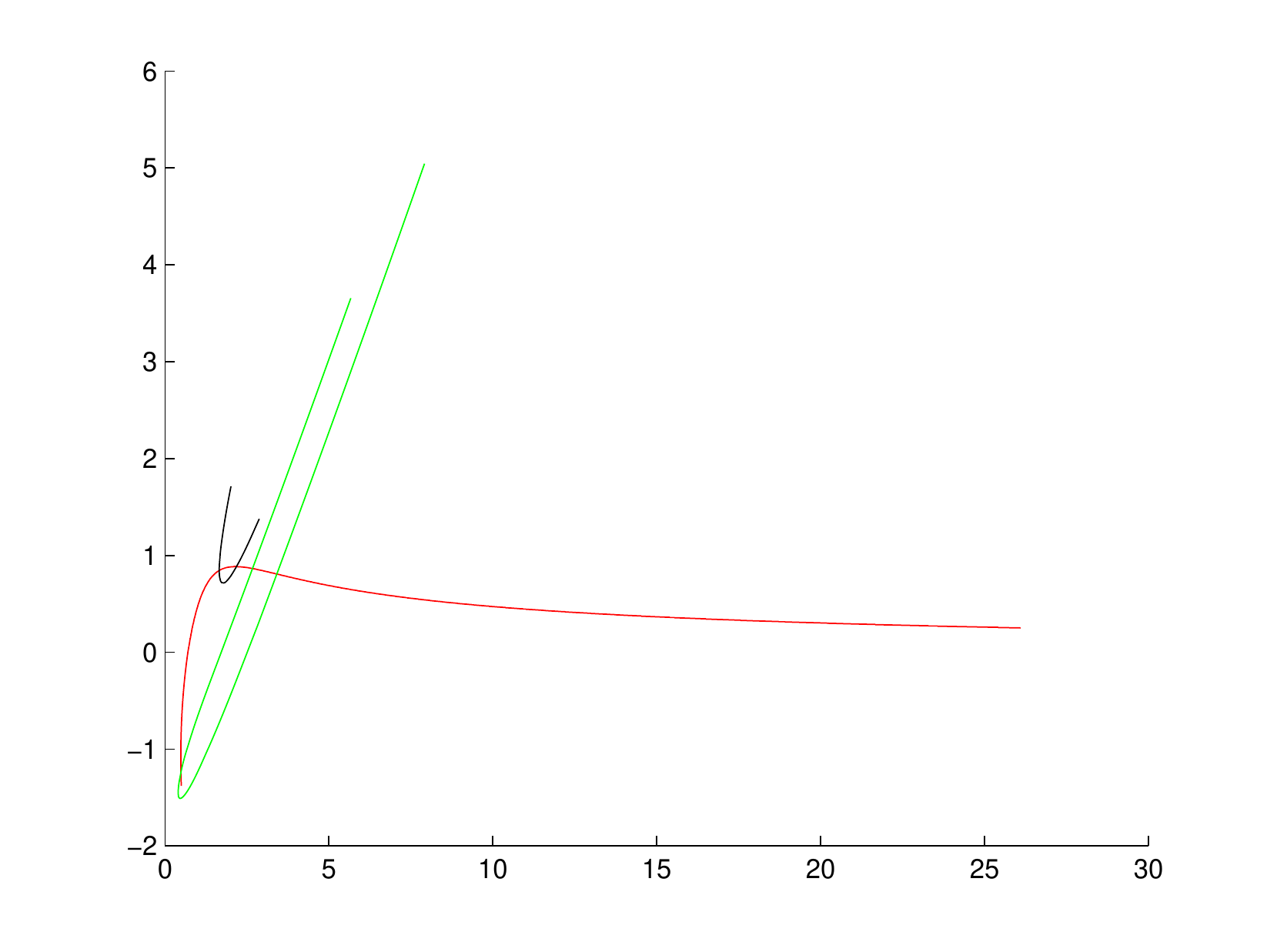}

\caption{Invariant curve and iterated initial data in Beck's problem. 
The long curve
is invariant manifold. The other two bended curves are 1st and 2nd forward iterates of initial data. The initial data itself is not present because continuation of the separatrix in that region is computationally too difficult.}
\label{2pointsbeck}
\end{figure}

The difference relation obtained in \cite{beck}, is given by
\[
(x_n + x_{n+1})\phi(x_n) = G(x_n)+G(x_{n-1}),
\]
where
\[
\phi(t) = \frac{1}{\sqrt{2\pi}}e^{-t^2/2}, \,\,\,\,
G(x) = \int_x^{\infty}\phi(t) dt.
\]
The actual turning points are $(-1)^n x_n,$ while $x_n \geq 0$.
For matlab computations, we use
\[
{\erfc(x)} = \frac{2}{\sqrt{\pi}}\int_x^{\infty} e^{-t^2} dt
\]
and the inverse function called ${\rm erfcinv}$.
Using the  relation
\[
G(x) = \frac{1}{2}\erfc (x/\sqrt{2}).
\]
the finite difference relation takes the form
\[
(x_{n+1}+x_{n})\phi(x_n) = \frac{1}{2}( \erfc (x_n/\sqrt 2)+\erfc (x_{n-1}/\sqrt 2)).
\]
Now, using $y_{n+1}=x_{n+1}-x_n$, we have
\begin{align}
x_{n+1}=\frac{1}{2\phi(x_n)} (  {\erfc} (x_n/\sqrt 2 )+{\erfc} ((x_n-y_n)/\sqrt 2) ) - x_n.
\end{align}

We will also use the inverse map which takes the form
\begin{align}
&x_{n+1} = x_n-y_n \nn \\
&y_{n+1} = x_{n+1}-\sqrt{2} \, {\rm erfcinv}\, ( 2\phi(x_{n+1})(x_n+x_{n+1}) - 
{\erfc}(x_{n+1}/\sqrt{2})). \nn
\end{align}
In this case, the initial data is given by the line segment $x_1=y_1 =t$.

\section{Gaussian tail distribution. One-sided search.}
In this section we verify that contraction mapping principle can be used to establish existence of 
separatrix for the one-sided 
search problem with Gaussian tail distribution.

In this case $f(x) =  e^{-x^2}$, so that the second order difference
 relation is given by
\[
x_{n+1}= \frac{1}{2x_n} \, e^{\, x_n^2-x_{n-1}^2}.
\]
Let $y_{n+1}=x_{n+1}^2-x_n^2$, then we have
\begin{align}
&x_{n+1} =\frac{1}{2x_n}e^{y_n} \nn \\
&y_{n+1} =x_{n+1}^2-x_n^2.    \nn
\end{align}
We will also need the inverse map
\begin{align}
&x_n = \sqrt{x_{n+1}^2-y_{n+1}} \nn \\
&y_n = \ln {(2x_n x_{n+1})}   \nn
\end{align}
In this case, the initial data is given by a quadratic curve
\[
y=x^2=t^2.
\]

Now, we show that the contraction principle  can be extended to Gaussian case.
\begin{theo}[Unstable invariant manifold for one-sided Gaussian]
There exists an invariant manifold containing a graph $y=h(x)$ on $x\in [x_0,\infty)$ and with
\[
|h(x)-\ln (2x^2)|< 1.
\]
\end{theo}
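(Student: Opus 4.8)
The plan is to reproduce, for the Gaussian inverse map, the contraction-mapping construction carried out for the exponential case in Proposition \ref{prp:separ}. Working in the $(x,y)$ coordinates of this section, recall that the inverse map is $\vrinv:(x_{n+1},y_{n+1})\mapsto(x_n,y_n)=(\sqrt{x_{n+1}^2-y_{n+1}},\,\ln(2x_nx_{n+1}))$ and that the boundary of the monotonicity region is $\{y=0\}$. A graph $y=h(x)$ is $\vrinv$-invariant precisely when
\[
h\!\left(\sqrt{x^2-h(x)}\right)=\ln\!\left(2x\sqrt{x^2-h(x)}\right)
\]
for all large $x$. I encode this as a fixed-point equation for the operator $H$ sending a graph $y=h(x)$ to the graph $y=H(h)(x):=\ln(2x\,w_h(x))$, where $w_h(x)$ is the unique solution with $w_h(x)>x$ of $w^2-h(w)=x^2$. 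The first backward iterate of the monotonicity boundary $\{y=0\}$ is the graph $h_0(x)=\ln(2x^2)$, which I take as both the natural initial guess and the center of the ball in which I seek the fixed point; the asserted bound $|h(x)-\ln(2x^2)|<1$ is then exactly the statement that the fixed point lies within distance one of $h_0$.

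First I would fix a threshold $x_0$ (to be taken large) and work in the space $\mathbf X=\{h\in C^1[x_0,\infty): h(x)>0,\ 0<h'(x)\le C/x\}$ for a suitable constant $C$, mirroring the space used in Proposition \ref{prp:separ}. Well-definedness of $w_h(x)$ follows from the implicit function theorem: the map $w\mapsto w^2-h(w)$ has derivative $2w-h'(w)$, which is strictly positive once $h'(w)<2w$ (guaranteed by the derivative bound), equals $x^2-h(x)<x^2$ at $w=x$, and tends to $+\infty$, so a unique root $w_h(x)>x$ exists with $w_h(x)\ge x_0$. Invariance of $\mathbf X$ under $H$ comes from differentiating the implicit relation, $w_h'(x)=2x/(2w_h-h'(w_h))$, whence
\[
H(h)'(x)=\frac1x+\frac{w_h'(x)}{w_h(x)}=\frac1x+\frac{2x}{w_h(x)\,(2w_h(x)-h'(w_h(x)))};
\]
since $w_h(x)>x\ge x_0$ this is positive and bounded by $C/x$ for $x_0$ large, reproducing the derivative bound, while positivity of $H(h)$ is immediate from $w_h(x)>x_0>1$.

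The contraction estimate is the heart of the argument and follows the exponential template. For $h,g\in\mathbf X$,
\[
|H(h)(x)-H(g)(x)|=|\ln w_h(x)-\ln w_g(x)|\le\frac{1}{\min(w_h,w_g)}\,|w_h(x)-w_g(x)|\le\frac{1}{x_0}\,|w_h(x)-w_g(x)|.
\]
Subtracting the defining relations $w_h^2-h(w_h)=x^2=w_g^2-g(w_g)$ and inserting $h(w_g)$ yields
\[
(w_h-w_g)\bigl(w_h+w_g\bigr)-\bigl(h(w_h)-h(w_g)\bigr)=(h-g)(w_g),
\]
so bounding $|h(w_h)-h(w_g)|\le \sup|h'|\,|w_h-w_g|$ gives $|w_h-w_g|\le \|h-g\|_0/(2x_0-\sup|h'|)$. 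Combining the two displays produces a genuine contraction $\|H(h)-H(g)\|_0\le \lambda\|h-g\|_0$ with $\lambda<1$ once $x_0$ is large, so Banach's principle yields the fixed point $h=\lim_k H^k(h_0)$, which solves the functional equation and hence traces the desired invariant manifold. For the bound I estimate the single step $|h_1(x)-h_0(x)|=|\ln(w_{h_0}(x)/x)|=\tfrac12\ln\!\bigl(1+\ln(2w_{h_0}^2)/x^2\bigr)=O(\ln x_0/x_0^2)$ uniformly, so $\|h-h_0\|_0\le \|h_1-h_0\|_0/(1-\lambda)<1$ for $x_0$ chosen large enough; this is precisely $|h(x)-\ln(2x^2)|<1$.

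To obtain that the manifold is an honest smooth graph (needed should one wish to run the cost-function monotonicity argument in this setting) I would, exactly as in the proposition following Proposition \ref{prp:separ}, rerun the contraction in the $C^1$ norm $\|h\|_1=\sup|h|+\sup|h'|$ under an auxiliary bound $|h''|\le 1$, controlling $w_h''$ through the twice-differentiated implicit relation. The main obstacle, relative to the exponential case, is purely the quadratic/square-root structure: the implicit equation is $w^2-h(w)=x^2$ rather than the affine $Y-\phi(Y)=y$, so $w_h$ and its derivatives carry extra factors of $w_h/x$ and $x/w_h$. These stay uniformly bounded because $w_h(x)=x\sqrt{1+h(w_h)/x^2}\to x$ for large $x$, so all key ratios are $1+O(\ln x_0/x_0^2)$ and every contraction constant can be forced below $1$ by enlarging $x_0$; verifying that the derivative bound defining $\mathbf X$ is genuinely preserved (and not merely asymptotically) is the one place demanding care.
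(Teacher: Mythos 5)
Your proposal is correct and follows essentially the same route as the paper: the identical fixed-point operator $\Phi(\phi)(x)=\ln(2x\,z_\phi(x))$ with $z_\phi^2-\phi(z_\phi)=x^2$, the same initial guess $\ln(2x^2)$, and the same contraction estimate obtained by subtracting the defining implicit relations. The only differences are cosmetic (your derivative bound $0<h'\le C/x$ versus the paper's $0<\phi'\le 1/2$) or amount to filling in steps the paper delegates to ``the same argument as in the exponential case,'' namely the invariance of the function space and the one-step estimate giving $\|h-h_0\|<1$.
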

\begin{proof}
Set up contraction mapping
\[
\Phi(\phi)(x) = \ln(2 z_{\phi}(x) x),
\]
where
\[
z^2_{\phi}(x) -\phi(z_{\phi}(x)) = x^2.
\]
Let
\[
{\bf X} = \{ \phi \in C^1 (x_0,\infty),  \phi(x)>0, 0< \phi^{\prime}(x) \leq 1/2 \}.
\]
By applying the same argument as in the exponential case, we can ensure that $\Phi$ leaves ${\bf X}$ invariant if we
take as the initial guess $\phi_0(x) = \ln(2x^2)$.

To establish contraction, consider
\[
|\Phi(\phi)(x) - \Phi(\psi)(x)| = |\ln(2xz_{\phi}(x)) - \ln(2xz_{\psi}(x))| =
\]
\[
|\ln(z_{\phi}(x)) - \ln(z_{\psi}(x))| \leq \frac{1}{\min(z_{\phi}(x),z_{\psi}(x))} |z_{\phi}(x)-z_{\psi}(x)|.
\]

\begin{figure}
\includegraphics[width=100mm]{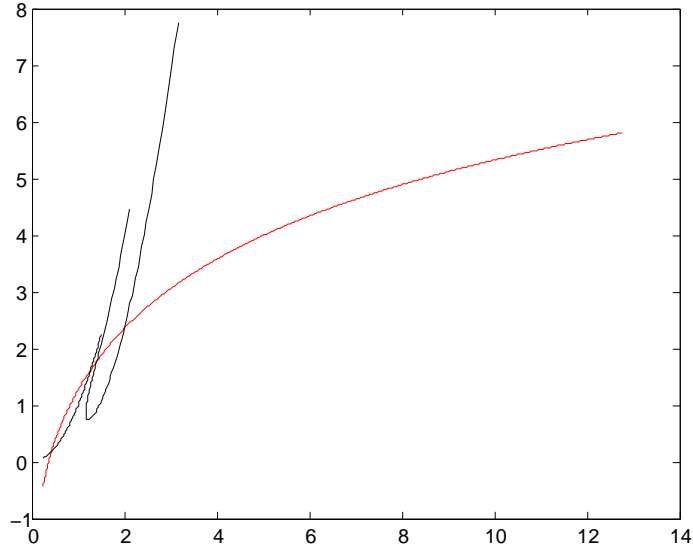}
\caption{Invariant curve and iterated initial data.The longer curve is 
the invariant manifold. Two other curves are iterated initial turning points.}
\label{2ptonesidegauss}
\end{figure}

Using the identity
\[
z_{\phi}^2(x) - z_{\psi}^2(x) = \phi(z_{\phi}(x)) - \psi(z_{\psi}(x)),
\]
and that $z_{\phi}(x) \geq x$,
we have
\[
|z_{\phi}(x)-z_{\psi}(x)| \leq \frac{1}{z_{\phi}(x)+z_{\psi}(x)}| \phi(z_{\phi}(x)) - \psi(z_{\psi}(x))|
\]
\[
\leq
\frac{1}{2x} ( |\phi(z_\phi(x)) - \psi(z_{\phi}(x))|+ |\psi(z_{\phi}(x)) - \psi(z_{\psi}(x))|
\]
\[
\leq \frac{1}{2x}
\left (
||f-g|| + ||g^{\prime}||\cdot |z_{\phi}(x)-z_{\psi}(x)|
\right ).
\]
Combining the terms, we have
\[
 |z_{\phi}(x)-z_{\psi}(x)| \leq \frac{1}{2x - ||{\psi}^{\prime}||}\, ||\phi-\psi||
\]
and then
\[
|\Phi(\phi)(x) - \Phi(\psi)(x)| \leq  \frac{1}{2x} \cdot \frac{1}{2x - ||\psi^{\prime}||} \, ||\phi-\psi||.
\]
Since we have assumed the bound $0<{\psi}^{\prime}<1/2$, taking $x\geq 1$, we obtain contraction
\[
|\Phi(\phi)(x) - \Phi(\psi)(x)| \leq  \frac{1}{3} \, ||\phi-\psi||.
\]
\end{proof}

\end{document}